\newtheorem{thm}{Theorem}[section]
\newtheorem{cor}[thm]{Corollary}
\newtheorem{lem}[thm]{Lemma}
\newtheorem{prop}[thm]{Proposition}
\theoremstyle{definition}
\newtheorem{defn}[thm]{Definition}
\theoremstyle{remark}
\newtheorem{rem}[thm]{Remark}
\numberwithin{equation}{section}
\newcommand{\be}{\begin{equation}}
\newcommand{\ee}{\end{equation}}
\newcommand{\R}{\mathbb R}
\newcommand{\eps}{\varepsilon}
\newcommand{\p}{\partial}
\newcommand{\comment}[1]{}
\begin{document}

\title[Nonlocal phase transitions]{Rigidity of minimizers in nonlocal phase transitions II} 
\author{O. Savin} 
\thanks{The author was partially supported by  N.S.F. Grant DMS-1500438}
\address{Department of Mathematics, Columbia University, New York, NY 10027}
\email{\tt  savin@math.columbia.edu}

\begin{abstract}
In this paper we extend the results of \cite{S3} to the borderline case $s = \frac 12$.  
We obtain the classification of global bounded solutions with asymptotically flat level sets for semilinear nonlocal equations of the type
$$\triangle^{\frac 12} u=W'(u) \quad \mbox{in} \quad \R^n,$$
where $W$ is a double well potential. 
\end{abstract}
\maketitle

\section{Introduction}

We continue the study initiated in \cite{S3} for the classification of global bounded solutions with asymptotically flat level sets for nonlocal semilinear equations of the type
$$\triangle^{s} u=W'(u) \quad \mbox{in} \quad \R^n,$$
where $W$ is a double well potential. 

The case $s \in (\frac 12, 1)$ was treated in \cite{S3} 
while $s\in (0, \frac 12)$ was considered by Dipierro, Serra and Valdinoci in \cite{DSV}. 
In this paper we obtain the classification of global minimizers with asymptotically flat level sets 
in the remaining borderline case $s = \frac 12$. 
All these works were motivated by the study of semilinear equations for the case of the classical Laplacian $s=1$, 
and their connection with the theory of minimal surfaces, see \cite{D,DKW,M,S1}. It turns out that when $s \in [\frac 12, 1)$, the rescaled level sets of $u$ still converge to a minimal surface while for $ s\in (0, \frac 12)$ they converge to an $s$-nonlocal minimal surface, see \cite{SV}. 

We consider the Ginzburg-Landau energy functional with nonlocal interactions corresponding to $\triangle^{1/2}$,
$$J(u, \Omega)=\frac14 \int_{\R^n \times \R^n \setminus ( \mathcal C \Omega \times \mathcal C \Omega)} 
 \frac{ (u(x)-u(y))^2}{|x-y|^{n+1}} \,dxdy+ \int_\Omega W(u) \, dx, $$ with $|u|\le 1$, and $W$ a double-well
potential with minima at $1$ and $-1$ satisfying
$$W \in C^2([-1,1]), \quad W(-1)=W(1)=0, \quad \mbox{$W>0$ on $(-1,1)$},$$
$$W'(-1)=W'(1)=0, \quad W''(-1)>0, \quad W''(1)>0.$$

Critical functions for the energy $J$ satisfy the Euler-Lagrange equation
$$\triangle ^{1/2} u=W'(u),$$
where $\triangle^{1/2} u$ is defined as
$$\triangle ^{1/2} u(x)= PV \int_{\R ^n} \frac{u(y)-u(x)}{|y-x|^{n+1}} \, dy . $$ 

Our main result provides the classification of minimizers with asymptotically flat level sets.

\begin{thm} \label{TM}
Let $u$ be a global minimizer of $J$ in $\mathbb{R}^n$. If the $0$ level set $\{u=0\}$ is asymptotically flat at $\infty$,
then $u$ is one-dimensional.
\end{thm}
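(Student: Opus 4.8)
One natural route, following \cite{S3}, is to localize the problem through the harmonic extension. For $s=\frac12$ the Caffarelli--Silvestre extension carries no weight, so $u$ extends to a function $U$ on the half-space $\R^{n+1}_+=\R^n\times(0,\infty)$ with $\triangle U=0$ in $\R^{n+1}_+$ and trace $u$ on $\{x_{n+1}=0\}$, and, up to a fixed multiplicative constant, $J$ corresponds to the local energy
$$
\mathcal J(U,\mathcal B)=\int_{\mathcal B\cap\R^{n+1}_+}|\nabla U|^2\,dX+\int_{\mathcal B\cap\{x_{n+1}=0\}}W(U)\,dx,
$$
with $u$ a minimizer of $J$ if and only if $U$ is a minimizer of $\mathcal J$ among competitors agreeing with $U$ outside a compact set. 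All of the analysis is then carried out for $U$.

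The second step is to pin down the geometry of the transition region. Using the minimality of $U$ together with comparison against the one-dimensional layer $g$ --- whose energy in $\mathcal B_R$ grows like $R^{n-1}\log R$, the logarithmic factor being the signature of the borderline exponent --- one gets the energy bound $\mathcal J(U,\mathcal B_R)\le C R^{n-1}\log R$ and the density (clean-ball) estimates for the super- and sub-level sets $\{u>1-\delta\}$ and $\{u<-1+\delta\}$. These confine the whole transition region $\{|u|<1-\delta\}$ to a bounded neighborhood of $\{u=0\}$; combined with the hypothesis that $\{u=0\}$ is asymptotically flat at infinity, it follows that in every sufficiently large ball $B_R\subset\R^n$ the level set $\{u=0\}$ lies in a slab $\{|x\cdot e_R|\le\theta(R)\,R\}$ with $\theta(R)\to0$ as $R\to\infty$.

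The heart of the matter is an \emph{improvement of flatness} for the zero level set: there are universal constants $\bar\theta,c>0$ and $\rho\in(0,1)$ such that, after rescaling a large ball to $B_1$, if $U$ is a minimizer with $0\in\{u=0\}$ and $\{u=0\}\cap B_1\subset\{|x\cdot e|\le\theta\}$ for some $\theta\le\bar\theta$, then $\{u=0\}\cap B_\rho\subset\{|x\cdot e'|\le\frac12\rho\,\theta\}$ for some unit vector $e'$. As in \cite{S1,S3}, this is obtained by first proving a Harnack-type inequality for the level set --- the ``center plane'' of the confining slab oscillates by a geometrically decaying amount across dyadic balls --- which, after the correct normalization, makes the family of rescaled level sets precompact; the limit is a graph $\{x\cdot e=w(x')\}$ over the hyperplane, and one checks that $w$ is harmonic (the linearization of the minimal surface equation about a hyperplane), so that the classical improvement of flatness for harmonic functions closes the induction. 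The decisive new feature at $s=\frac12$ is that the normalization entering the linearization must be taken with an extra factor $|\log\eps|$, and one must verify that under this normalization the nonlocal ``tail'' terms still vanish in the limit and that the limit is genuinely harmonic, rather than solving a degenerate or nonlocal limiting equation. This is the step I expect to be the main obstacle.

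Finally, iterating the improvement of flatness across scales upgrades the slowly decaying slabs of the second step to the statement that $\{u=0\}$ is an exact hyperplane $\{x\cdot e=0\}$. It remains to see that a global minimizer whose zero level set is a hyperplane is one-dimensional; this follows from the uniqueness of the layer, which can be obtained by a sliding argument: $u$ is trapped between translates $g(x\cdot e-t)$, and the difference between a minimizer and a translate of the layer satisfies a linear equation with bounded zeroth-order coefficient, to which the strong maximum principle on $\overline{\R^{n+1}_+}$ applies, so it cannot vanish at an interior point without being identically zero. Hence $u=g(x\cdot e-t_0)$ for some $t_0$, which is the desired conclusion.
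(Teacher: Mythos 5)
Your outline follows the same strategy as the paper: pass to the harmonic extension, prove a Harnack-type oscillation decay for the level set, extract by compactness a limit graph solving the Laplace equation, and iterate the resulting improvement of flatness. Two remarks on where your sketch and the paper diverge at the acknowledged gap. First, you have misplaced the ``decisive new feature'' at $s=\tfrac12$: in the paper there is no $|\log\eps|$ renormalization of the linearization --- the rescaling is the plain anisotropic one $z'=x'/l$, $z_n=x_n/\theta$ with $\theta\sim l^{3/2}$ gained per step, and the limit $w$ is harmonic in the ordinary viscosity sense (Claim 2 in the proof of Theorem \ref{c1alpha}). The genuine new difficulties are elsewhere: (i) at a point of $\{u=0\}$ with a tangent ball of radius $R$ one can no longer bound the curvature pointwise by $R^{-1}$ as for $s>\tfrac12$; this is replaced by the integral estimate $\int \chi_D|x|^{-(n+1)}dx\le C\log R/R$ of Lemma \ref{l3} and its consequence Corollary \ref{c1}, which is what feeds both the viscosity mean-curvature estimate (Proposition \ref{p1}) and the contact-set analysis in the Harnack step; and (ii) the energy density is logarithmic, $\mathcal J(G,\mathcal B_R^+)=C^*\log R+O(1)$, so the Harnack inequality requires the sharp comparison of Lemma \ref{l4.5}, where the excess $c_0(\sigma)\log l$ produced by two well-separated transitions is extracted via a monotone rearrangement argument; your crude bound $CR^{n-1}\log R$ without the sharp constant $C^*$ would not yield the contradiction. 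Second, the paper's hypotheses for the Harnack and flatness steps are not just slab confinement but also that large balls of radius $\sim l^2\theta^{-1}$ tangent to the slab lie in $\{u<0\}$, $\{u>0\}$ (propagated through the dyadic family $R_m$, $\theta_m$); this clean-ball information must be carried along the iteration, which your second step only gestures at. Your closing sliding argument for upgrading ``the zero level set is a hyperplane'' to ``$u$ is one-dimensional'' is consistent with what the paper leaves implicit.
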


The hypothesis that $\{u=0\}$ is asymptotically flat means that there exist sequences of positive numbers
$\theta_k$, $l_k$ and unit vectors $\xi_k$ with $l_k \to \infty$,
$\theta_k l_k^{-1}\to 0$ such that
$$\{u=0\} \cap B_{l_k} \subset \{ |x \cdot \xi_k| < \theta_k \}.$$

 By saying that $u$ is one-dimensional we understand that $u$ depends only on one direction $\xi$, i.e.  $u=g(x \cdot \xi)$.

As in \cite{S3}, we obtain several corollaries. We state two of them.

\begin{thm}\label{7min}
A global minimizer of $J$ is one-dimensional in dimension $n \le 7$.
\end{thm}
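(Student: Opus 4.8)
The plan is to derive Theorem~\ref{7min} from Theorem~\ref{TM} by a dimension-reduction argument, mimicking the classical De Giorgi-type strategy used for the fractional Allen--Cahn equation and, in the minimal surface setting, the Bernstein theorem up to dimension $7$. The only nontrivial content is to show that in dimension $n\le 7$ the level set $\{u=0\}$ of a global minimizer is \emph{automatically} asymptotically flat at $\infty$; once this is established, Theorem~\ref{TM} applies directly and forces $u$ to be one-dimensional.

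\medskip

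\noindent\textbf{Step 1: Blow-down and convergence to a minimal cone.}
First I would consider the rescalings $u_R(x) = u(Rx)$ and the associated rescaled energy. Using the $\Gamma$-convergence / density estimate machinery for $s=\frac12$ (the borderline case where the correct scaling is the perimeter functional, as recalled in the introduction via \cite{SV}), the level sets $\frac1R\{u=0\}$ converge locally uniformly, along a subsequence $R=R_k\to\infty$, to a set $E$ whose boundary $\partial E$ is a minimizer of perimeter in $\R^n$, i.e. a minimal surface; moreover by the conical structure of blow-downs of area-minimizing sets, $\partial E$ can be taken to be a minimizing cone. This step requires the uniform density estimates and the improvement-of-flatness / compactness theory for minimizers of $J$; I would cite these from \cite{S3} and \cite{SV}, adapted to $s=\frac12$.

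\medskip

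\noindent\textbf{Step 2: Triviality of the cone in low dimensions.}
By the Bernstein-type theorem for area-minimizing hypersurfaces (Simons, combined with the Bombieri--De Giorgi--Giusti nonexistence of singular minimizing cones), in dimension $n\le 7$ any minimizing cone is a hyperplane. Hence the blow-down limit $\partial E$ is a hyperplane $\{x\cdot\xi=0\}$ for some unit vector $\xi$ (a priori $\xi$ may depend on the subsequence, but flatness at a given large scale is all we need).

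\medskip

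\noindent\textbf{Step 3: Flatness at $\infty$ and conclusion.}
From the convergence of $\frac1{R_k}\{u=0\}$ to a hyperplane I would extract, for every $\delta>0$, a radius $l_k = R_k$ and a thickness $\theta_k = \delta R_k$ with a unit vector $\xi_k$ such that $\{u=0\}\cap B_{l_k}\subset\{|x\cdot\xi_k|<\theta_k\}$; letting $\delta=\delta_k\to0$ slowly gives $\theta_k l_k^{-1}\to0$ and $l_k\to\infty$, which is exactly the asymptotic flatness hypothesis of Theorem~\ref{TM}. Applying Theorem~\ref{TM} then yields that $u$ is one-dimensional.

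\medskip

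\noindent The main obstacle is Step~1: establishing that the blow-down of the zero level set of a global minimizer of $J$ is genuinely a (minimizing) minimal cone in the borderline case $s=\frac12$. This needs the uniform density/energy estimates and the clean-ball / Hausdorff-convergence statements for minimizers at this exponent; for $s\in(\frac12,1)$ these are in \cite{S3}, and the point of the present paper is precisely to push them to $s=\frac12$, where the energy scaling is logarithmically borderline and the limiting functional is the perimeter. Once the compactness and the minimal-cone structure are in hand, Steps~2 and~3 are standard.
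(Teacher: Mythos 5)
Your argument is exactly the one the paper intends: Theorem \ref{7min} is stated as a corollary of Theorem \ref{TM}, derived "as in \cite{S3}" by blowing down, using the $\Gamma$-convergence and density estimates for $s=\tfrac12$ (from \cite{SV} and the companion literature) to get a minimizing perimeter cone, invoking Simons' theorem for $n\le 7$ to conclude the blow-down is a hyperplane, and thus verifying the asymptotic flatness hypothesis of Theorem \ref{TM}. Your identification of Step 1 (density estimates and Hausdorff convergence of the rescaled level sets in the borderline logarithmic case) as the only ingredient not contained in this paper is also accurate; those estimates are available in the cited references, so the proof is complete and matches the paper's route.
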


\begin{thm}{\label{8min}}
Let $u \in C^2(\mathbb{R}^n)$ be a solution of
\begin{equation}{\label{8min1}}
\triangle^{1/2} u = W'(u),
\end{equation}
such that
\begin{equation}{\label{8min2}}
|u| \le 1, \quad \partial_n u>0, \quad \lim_{x_n \to \pm
\infty}u(x',x_n) =\pm 1.
\end{equation}

Then $u$ is one-dimensional if $n \le 8$.

\end{thm}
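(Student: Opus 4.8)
The plan is to deduce Theorem~\ref{8min} from Theorem~\ref{TM} by showing that a solution of \eqref{8min1}--\eqref{8min2} is automatically a global minimizer of $J$ whose zero level set is asymptotically flat at infinity. When $n\le 7$ there is nothing to add: once $u$ is known to be a global minimizer it is one-dimensional by Theorem~\ref{7min}. So the real case is $n=8$, where the monotonicity $\partial_n u>0$ must be used to gain one dimension over Theorem~\ref{7min}. The first step is the reduction to a minimizer, via the usual sliding argument: the translates $u^t(x):=u(x',x_n+t)$, $t\in\R$, are solutions of \eqref{8min1}, they are ordered by $\partial_n u>0$, and by \eqref{8min2} one has $u^t\to\pm1$ pointwise as $t\to\pm\infty$. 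Hence the graphs of the $u^t$ together with the two constant solutions $\pm1$ give an ordered foliation of $\R^n\times(-1,1)$ by solutions, and a solution belonging to such a calibrated foliation minimizes $J$ in every bounded domain, hence globally. This is the nonlocal counterpart of the Alberti--Ambrosio--Cabr\'e theorem and is the same reduction used for $s\in(\tfrac12,1)$ in \cite{S3}.

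The second step is a blow-down. Now that $u$ is a global minimizer, I would use the energy/density and flatness estimates underlying \cite{S3}, together with the fact that for $s\in[\tfrac12,1)$ the rescaled level sets of a minimizer converge to a classical minimal surface (cf.\ \cite{SV}), to conclude that a subsequence of $\tfrac1R\bigl(\{u=0\}\cap B_R\bigr)$ converges as $R\to\infty$ to $\partial E_\infty$, where $E_\infty\subset\R^8$ is an area-minimizing cone. Because $\partial_n u>0$, each level set $\{u=\lambda\}$, $\lambda\in(-1,1)$, is the graph $x_n=\varphi_\lambda(x')$ of a function over $\R^7$; this property is scale invariant and stable under the limit, so $E_\infty$ is monotone in the $e_n$ direction: $E_\infty+te_n\subseteq E_\infty$ for all $t>0$.

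The third step is the rigidity of monotone minimizing cones in low dimension. I would invoke that an area-minimizing cone in $\R^8$ which is monotone in one direction is a half-space: if it is invariant under $e_n$-translations it splits as $F\times\R$ with $F\subset\R^7$ an area-minimizing cone, hence a half-space by the Simons-type classification of minimizing cones through dimension $7$; otherwise its boundary is the graph of a $1$-homogeneous function on $\R^7$, which must be affine by the Bernstein theorem for entire minimal graphs over $\R^7$. (It is precisely here that dimension $8$ is sharp, the Bombieri--De Giorgi--Giusti graph living over $\R^8$.) Thus $\partial E_\infty$ is a hyperplane, and unraveling the definition of blow-down convergence this is exactly the statement that $\{u=0\}$ is asymptotically flat at $\infty$ in the sense of the hypothesis of Theorem~\ref{TM}. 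Applying Theorem~\ref{TM} then yields that $u$ is one-dimensional.

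The step I expect to be the main obstacle is the second one: turning the heuristic blow-down into a rigorous statement requires the quantitative improvement-of-flatness and energy-density estimates that force the level sets of a minimizer of $J$ at $s=\tfrac12$ to converge to a genuine area-minimizing hypersurface (rather than to an $s$-nonlocal minimal surface as in the regime $s<\tfrac12$), and it requires checking that monotonicity in $e_n$ survives the limit and rules out a singularity at the origin when $n=8$. In the local case $s=1$ all of this is classical; at $s=\tfrac12$ it rests on the estimates developed for Theorem~\ref{TM}. By comparison, the first and third steps are soft once the right ingredients are cited.
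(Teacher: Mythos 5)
Your proposal is correct and follows essentially the route the paper intends: the paper states Theorem \ref{8min} as a corollary obtained ``as in \cite{S3}'', and the argument there is exactly your three steps --- monotonicity plus the limits at $\pm\infty$ give an ordered foliation by translates, hence $u$ is a global minimizer; the blow-down of $\{u=0\}$ is then a perimeter minimizer monotone in $e_n$, which in $\R^8$ is a half-space by the cylinder/graph dichotomy together with the Simons and Bernstein theorems through dimension $7$; asymptotic flatness follows and Theorem \ref{TM} applies. The only inessential over-claim is calling the blow-down a cone (which would require a monotonicity formula); the dichotomy argument works for any monotone global perimeter minimizer, so nothing is lost.
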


Theorem \ref{7min} and Theorem \ref{8min} without the limit assumption in \eqref{8min2}, 
have been established by Cabre and Cinti \cite{CC} in dimension $n=3$. 
Recently, Figalli and Serra \cite{FS} obtained the same conclusion for all stable solutions in dimension $n=3$. 
Their result, combined with Theorem \ref{7min} above, implies the validity of 
Theorem \ref{8min}  without the limit assumption in \eqref{8min2}, in dimension $n=4$. 

We prove our result by making use of the extension property of $\triangle^{1/2}$. 
Let $U(x,y)$ be the harmonic extension of $u(x)$ in $\R^{n+1}_+$
$$\triangle U=0 \quad \mbox{in $\R^{n+1}_+$,} \quad U(x,0)=u(x), $$
then $$\triangle^{1/2}u(x)= c_nU_y(x,0),$$
with $c_n$ a dimensional constant.
Global minimizers of $J(u)$ in $\R^n$ with $|u| \le 1$ correspond to global minimizers of the extension energy $\mathcal J(U)$ with $|U| \le 1$ where
$$ \mathcal J(U):= \frac{c_{n}}{2} \int |\nabla U|^2 \, dx dy + \int W(u) dx.$$ 
After dividing by a constant and relabeling $W$ we may fix $c_{n}$ to be $1$. 

We obtain Theorem \ref{TM} from an improvement of flatness property for the level sets of minimizers of 
$\mathcal J$, see Proposition \ref{c1alpha}. 
We follow the main steps from \cite{S3,S2}, however some technical modifications are required. 
The main difference when $s=\frac 12$ is that at a point of $\{u=0\}$ 
which has a large ball of radius $R$ tangent from one side we can no longer estimate its curvatures in terms of $R^{-1}$. 
Instead we obtain an integral estimate (see Lemma \ref{l3}) which turns out to be sufficient for the key Harnack estimate of the level sets.

The paper is organized as follows. In Sections 2 we introduce some notation and then construct a family $G_R$ of axial supersolutions. In Sections 3 and 4 we provide viscosity properties for the mean curvature of the level set $\{u=0\}$. 
In Section 5 we obtain 
the Harnack inequality of the level sets and in Section 6 we prove our main result by compactness.

\section{Supersolution profiles}

We introduce the following notation.

\noindent
We denote points in $\R^n$ as $x=(x',x_n)$ with $x' \in \R^{n-1}$. The ball of center $z$ and radius $r$ is denoted by $B_r(z)$,
and $B_r:=B_r(0)$.
Points in the extension variables $\R^{n+1}_+$ are denoted by $X=(x,y)$ with $y>0$, and the ball of radius $r$ as $\mathcal B_r^+$
$$\mathcal B^+_r :=\{(x,y) \in \R^{n+1}_+| \quad |(x,y)| <r \} \quad  \subset \R^{n+1}.$$
Given a function $U(x,y)$ we denote by $u(x)$ its trace on $\{y=0\}$.

Let $\mathcal J$ be the energy 
$$ \mathcal J(U, \mathcal B_R^+):= \frac 12 \int_{\mathcal B^+_R} |\nabla U|^2 \, dx dy + \int_{B_r} W(u) dx,$$
and a critical function $U$ for $\mathcal J$ satisfies the Euler-Lagrange equation
\be\label{eq}
\triangle U=0, \quad \quad U_y=W'(u).
\ee

In \cite{PSV} it was established the existence and uniqueness up to translations 
of a global minimizer $G$ of $\mathcal J$ in 2D which is increasing in the first variable and which has limits $\pm 1$ at $\pm \infty$:

a) $G:\R^2_+ \to (-1,1)$ solves the equation \eqref{eq},

b) $G(t,y)$ is increasing in the $t$ variable and its trace $g(t):=G(t,0)$ satisfies
$$g(0)=0 , \quad \lim_{t \to \pm \infty} g(t)= \pm 1.$$
Moreover, $g$ and $g'$ have the following asymptotic behavior 
\be\label{gg}
1-|g| \sim \min\{ 1, |t|^{-1}\}, \quad \quad g' \sim \min \{ 1, |t|^{-2} \},
\ee
and 
$$
\mathcal J(G, B_R^+) = C^* \, \log R + O(1),
$$
for some constant $C^*$.

Constants that depend on $n$, $W$, $G$ are called universal constants, and we denote them by $C$, $c$. 
In the course of the proofs the values of $C$, $c$ may change from line to line when there is no possibility of confusion. If the constants depend on other parameters, say $\theta$, $\rho$, then we denote them by $C(\theta, \rho)$ etc. 

For simplicity of notation we assume that 
\be\label{W1}
\mbox{$W$ is uniformly convex outside the interval $[g(-1),g(1)]$.}
\ee

Since $\triangle G_t=0$ and $G_t \ge 0$, we easily conclude that
\be\label{NG}
|G_y| \le C \frac{1}{1+r}, \quad \quad C'\, \frac {1+y}{1+r^2} \ge G_t \ge c'  \, \frac {1+y}{1+r^2},
\ee
where $r$ denotes the distance to the origin in the $(t,y)$-plane. We also obtain
$$|G| \le 1- c r^{-1}, \quad \quad \forall \, r \ge 1.$$
Define in $\mathcal B_{R/8}^+$,

\be\label{hr}
H_R(t,y):=G(t,y) + \frac {C_0}{R} \left((y+C_1)\log R- y \log(y+1) + \frac{1}{R} (t^2-y^2)    \right),
\ee
for some $C_0$, $C_1$ universal, large to be made precise later.
 
Let $\bar H_R$ denote the truncation of $H_R$ at level 1,
$$ \bar H_R = \min\{ H_R,1\}.$$

Since $$(y+C_1)\log R- y \log(y+1) - \frac 1 R y^2  $$
is strictly increasing in the interval $[0, R/8]$ we conclude that
$$
\mbox{$ H_R  \ge G + 2 > 1$ if either $y> C R / \log R$ or $|t|>R/16$.}
$$ 
Hence $\bar H_R=1$ outside $ \mathcal B_{R/16}^+$, and we extend $\bar H_R =1$ outside this ball in the whole $\R^2_+$. 
Finally we define $G_R$ in $\R^2_+$ as 
$$G_R(t,y): = \inf_{l \ge 0} \bar H_R(t+l,y).$$

Next we collect some key properties of the function $G_R$.
\begin{lem}[Supersolution profile]\label{gr} Then for all large $R$ we have

1) $G_R=1$ outside $\mathcal B_{R/8}^+ \cup \left ( (-\infty,0] \times [0,R/8] \right )$, 

\

2) $G_R(t,y)$ is nondecreasing in $t$, and $\p_t G_R=0$ outside $\mathcal B_{R/8}^+$,

\

3) $G_R = H_R$ in $ \mathcal B_{R^{1/3}}^+ $ and 
$$ |G_R-G| \le C \,  \frac {\log R}{R}  \quad \quad \mbox{in} \quad  \mathcal B_{4}^+,$$

\

4)
$$\triangle G_R+ \frac{2(n-1)}{R}\,   \partial_t G_R \le 0, $$
and on $y=0$: 
$$\partial_y G_R < W'(G_R) + \chi_{[-1,1]} \frac{C \log R}{R} .$$

\end{lem}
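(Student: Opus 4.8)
The plan is to verify the four properties essentially in the order stated, since each subsequent one leans on the structure established before it. Properties (1) and (2) are the most structural, and I would extract them directly from the construction of $G_R$ as the infimal convolution $G_R(t,y) = \inf_{l\ge 0}\bar H_R(t+l,y)$. Monotonicity in $t$ is immediate from the definition of an inf over leftward translates: $\partial_t G_R \ge 0$ wherever $G_R$ is differentiable. The statement that $G_R = 1$ outside $\mathcal B^+_{R/8}\cup\big((-\infty,0]\times[0,R/8]\big)$ and that $\partial_t G_R = 0$ outside $\mathcal B^+_{R/8}$ follows from the observation already recorded in the text: $H_R \ge G+2 > 1$ whenever $y > CR/\log R$ or $|t| > R/16$, so $\bar H_R \equiv 1$ outside $\mathcal B^+_{R/16}$, and the infimal convolution can only lower values toward $1$ and can only ``import'' the non-constant region by translating it to the left — hence on the right of and above $\mathcal B^+_{R/8}$ nothing changes. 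One must be slightly careful to check the boundary region $\{t<0,\ 0\le y\le R/8\}$ where the inf genuinely acts; there $G_R$ can dip below $1$, which is why that strip is excluded from property (1).

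For property (3), the point is that near the origin the translate $\bar H_R(t+l,\cdot)$ with $l>0$ is strictly larger than $\bar H_R(t,\cdot)$, so the infimum is attained at $l=0$: I would use the monotonicity $G_t \ge c'(1+y)/(1+r^2) > 0$ from \eqref{NG} together with the explicit lower-order correction in \eqref{hr} to show $\partial_t H_R > 0$ throughout a ball of radius comparable to $R^{1/3}$ — here the gradient of the quadratic term $R^{-2}(t^2-y^2)$ is of order $R^{-2}R^{1/3} = R^{-5/3}$, negligible against the $R^{-1}$-scale of the logarithmic term and the order-$1$ lower bound on $G_t$ — so no leftward translate helps and $G_R = H_R$ there. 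The estimate $|G_R - G|\le C R^{-1}\log R$ on $\mathcal B^+_4$ is then just reading off the size of the correction term in \eqref{hr} on that fixed ball, since $\log(y+1)$ and $t^2-y^2$ are bounded there.

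Property (4) is the heart of the lemma and the main obstacle. For the interior inequality $\triangle G_R + \tfrac{2(n-1)}{R}\partial_t G_R \le 0$, I would first check it for $H_R$ itself inside $\mathcal B^+_{R/8}$: since $\triangle G = 0$, one computes $\triangle H_R = \tfrac{C_0}{R}\big(-\triangle(y\log(y+1)) + \tfrac{1}{R}\triangle(t^2-y^2)\big) = \tfrac{C_0}{R}\big(-\tfrac{y+2}{(y+1)^2}\big)$, which is strictly negative, and then bound the favorable-sign error term $\tfrac{2(n-1)}{R}\partial_t H_R$ — using $\partial_t H_R \le \partial_t G + CR^{-1} \le C(1+y)(1+r^2)^{-1} + CR^{-1}$ from \eqref{NG} — against the gain $-c C_0 R^{-1}(y+1)^{-1}$; choosing $C_0$ large relative to $n$ and to the constants in \eqref{NG} makes the sum negative. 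The infimal convolution then preserves the supersolution property: $G_R = \inf_l H_R(\cdot + l e_t)$ is a supersolution of the same translation-invariant operator (an inf of supersolutions is a supersolution, in the viscosity sense), and on the region where $G_R = 1$ the inequality is trivial. The only delicate points are the interface $\{y=0\}$ and the matching across $\partial\mathcal B^+_{R/8}$: on $y=0$ I would compute $\partial_y H_R = \partial_y G + \tfrac{C_0}{R}(C_1\log R - \log 1 - \tfrac{2y}{R}\big|_{y=0}) = W'(g) + \tfrac{C_0 C_1}{R}\log R$ using \eqref{eq}, and then convert this to the stated inequality $\partial_y G_R < W'(G_R) + \chi_{[-1,1]}\tfrac{C\log R}{R}$ by exploiting the convexity of $W$ outside $[g(-1),g(1)]$ assumed in \eqref{W1} — there $W'(H_R)\ge W'(g) + c(H_R - g)\ge W'(g)$ absorbs the perturbation — while on the bounded range $|g|\le g(1)$ the error is simply $O(R^{-1}\log R)$, accounting for the characteristic function. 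The infimal convolution again only helps on $\{y=0\}$ since it decreases $\partial_y$ at a touching point from the right. I expect the bookkeeping of which universal constants must be chosen large, and in what order ($C_1$ first to dominate $y\log(y+1)$ on $[0,R/8]$, then $C_0$ for the interior inequality), to be the fussiest part, but no single estimate is deep.
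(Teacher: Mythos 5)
Your treatment of properties 1)--3) and of the interior inequality in 4) follows the paper's argument closely and is essentially correct, with one small caveat in 3): to conclude $G_R=H_R$ at a point $(t,y)\in\mathcal B_{R^{1/3}}^+$ you need $\bar H_R(t+l,y)\ge H_R(t,y)$ for \emph{every} $l\ge 0$, so the monotonicity of $H_R$ in $t$ must be checked along the whole ray $\{t+l:\ l\ge0\}$, which leaves $\mathcal B_{R^{1/3}}^+$; the computation $\partial_t H_R=G_t+2tC_0R^{-2}$ together with \eqref{NG} in fact gives $\partial_t H_R>0$ for all $t\ge -R^{1/2}$, which is what is actually needed (one must also record $|H_R|<1$ on $\mathcal B_{R^{1/3}}^+$ so that the truncation is inactive at the point itself).

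The genuine gap is in the boundary inequality on $\{y=0\}$, which is the one delicate point of the lemma and the only place where $C_1$ plays a role. First, your formula for the normal derivative is wrong: $\partial_y\left[(y+C_1)\log R\right]=\log R$, so $\partial_y H_R(t,0)=W'(g)+C_0\frac{\log R}{R}$, with no factor $C_1$. Second, and more seriously, your chain $W'(H_R)\ge W'(g)+c(H_R-g)\ge W'(g)$ only yields $W'(H_R)\ge W'(g)$, which does \emph{not} dominate $\partial_y H_R=W'(g)+C_0\frac{\log R}{R}$; saying the convexity ``absorbs the perturbation'' skips exactly the step that has to be quantified. The mechanism is that at $y=0$ the zeroth-order perturbation $H_R-G=C_0C_1\frac{\log R}{R}+C_0\frac{t^2}{R^2}$ carries an extra factor $C_1$ relative to the derivative perturbation $C_0\frac{\log R}{R}$; uniform convexity of $W$ outside $[g(-1),g(1)]$ (i.e.\ for $|t|\ge1$) gives $W'(H_R)\ge W'(G)+cC_0C_1\frac{\log R}{R}$, and choosing $C_1>1/c$ closes the inequality $W'(H_R)>\partial_y H_R$ there, while for $|t|\le1$ one simply pays the $\chi_{[-1,1]}\,C\log R/R$ error. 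Your closing remark that $C_1$ is chosen ``to dominate $y\log(y+1)$ on $[0,R/8]$'' misidentifies its purpose --- the monotonicity of $(y+C_1)\log R-y\log(y+1)-y^2/R$ on $[0,R/8]$ holds already with $C_1=0$ --- which confirms that the actual reason for inserting $C_1$ is missing from your plan.
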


The inequalities in 4) are understood in the viscosity sense.

Notice that by \eqref{NG}, property 3) implies that $$G_R(t,y) \le G \left(t+ C''\,\frac{\log R}{R},y \right) \quad \mbox{ in}  \quad \mathcal B_4^+.$$

\begin{proof}
Properties 1) and 2) follow from the definition of $G_R$ since  $\bar H_R=1$ outside $\mathcal B_{R/8}^+$.
We compute 
\be\label{21}
\partial_t H_R= G_t + \frac{2 t C_0}{R^2},
\ee
and use \eqref{NG} to conclude that $\partial _t H_R >0$ if $t \ge - R^{1/2}$. In $\mathcal B_{R^{1/3}}^+$ we have 
$$|H_R| \le |G| + \frac{C_0}{R}(R^{1/2}+R^{-1/3}) \le 1- c R^{-1/3} + C R^{-1/2} < 1,$$
hence 
$$G_R=H_R \quad \mbox{in} \quad \mathcal B_{R^{1/3}}^+.$$

For property 4) we use that $G_R$ is the infimum over a family of left translations of $H_R$, hence it suffices to show the inequalities for $H_R$ in the region where $H_R<1$ and $\partial_t H_R>0$. This means that we can restrict to the region where $ 1+y \le C R/ \log R,$ and $ |t| \le C R^{2/3}$.
From the definition \eqref{hr} of $H_R$ we have
$$\triangle H_R = - \frac{C_0}{R}\left (\frac{1}{(1+y)^2} + \frac{1}{1+y}\right) \le -\frac{C_0}{R} \frac{1}{1+y},$$
and, by \eqref{21}
$$\partial _t H_R \le C' \frac{1+y}{1+r^2} + C_0 \frac {r}{R^2}.$$
Since $1+y \le C R/ \log R$ we easily obtain the first inequality in 4) by choosing $C_0$ large depending on $C'$.
On $y=0$ we have 
$$\partial_y H_R = \partial_y G + C_0 \frac{\log R}{R} = W'(G)  + C_0 \frac{\log R}{R} . $$
$$ H_R=G + C_0C_1 \frac{\log R}{R} + C_0 \frac{t^2}{R^2}.$$
Then, by \eqref{W1}, $W''(g)>c$ outside the interval $[-1, 1]$, and we find that  
$$ W'(H_R) \ge W'(G) + c C_1 C_0 \frac{\log R}{R} > \partial_y H_R,$$
which easily gives the desired conclusion.

\end{proof}

\section{Estimates for $\{u=0\}$}

In this section we derive properties of the level sets of solutions to
\begin{equation}{\label{2min1}}
\triangle U = 0, \quad \quad \p_y U=W'(U),
\end{equation}  
which are defined in large domains.

In the next lemma we use the functions $G_R$ constructed in the previous section and find axial approximations to \eqref{2min1}.

\begin{lem}[Axial approximations]\label{l1}
Let $G_R: \R^2_+ \to (-1,1]$ be the function constructed in Lemma \ref{gr}. Then its axial rotation in $\R^{n+1}$
$$\Phi_R (x,y):=G_R(|x|-R,y)$$
satisfies 

1) $\Phi_R=1$ outside $\mathcal B_{2 R}^+$, and $\Phi_R$ is constant in $\mathcal B_{ R/2}^+$. 

\

2) 
$$\triangle \Phi_R \le 0 \quad \mbox{ in} \quad \R^{n+1}_+,$$ and $$\p_y \Phi_R < W'(\Phi_R) \quad \mbox{when} \quad |x|-R \notin  [-1,1] ,$$

\

3) In the annular region $|(|x|-R,y)| \le R ^\frac 13,$ we have 
$$|\triangle \Phi_R| \le C \frac 1 R, \quad \quad |\partial_y \Phi_R - W'(\Phi_R)| \le C \frac {\log R}{R}. $$

\end{lem}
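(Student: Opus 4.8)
The plan is to derive all three properties of $\Phi_R(x,y) = G_R(|x|-R, y)$ directly from the corresponding statements about $G_R$ in Lemma \ref{gr}, using the elementary fact that the Laplacian of a radial profile picks up a lower-order term from the curvature of the sphere of radius $|x|$. Concretely, if we write $t = |x| - R$ and $\Phi_R = G_R(t,y)$, then
\[
\triangle \Phi_R = \partial_{tt} G_R + \partial_{yy} G_R + \frac{n-1}{|x|}\,\partial_t G_R = \triangle_{(t,y)} G_R + \frac{n-1}{|x|}\,\partial_t G_R,
\]
(in the viscosity sense, since $G_R$ is only a viscosity supersolution). Property 1) is immediate: Lemma \ref{gr}(1) says $G_R = 1$ outside $\mathcal B^+_{R/8} \cup ((-\infty,0]\times[0,R/8])$, and $G_R$ is constant for $t \le -R/8$; translating $t \mapsto |x|-R$ and noting $|x| \ge 0$, the region where $\Phi_R$ can be non-constant sits inside $|x| - R \in [-R/8, R/8]$, i.e. $R/2 \le |x| \le 2R$ roughly, plus the ball $\mathcal B^+_{R/2}$ where $|x|-R \le -R/2 \le -R/8$ forces $\Phi_R$ constant. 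One adjusts the numerical constants to match the stated $\mathcal B^+_{2R}$ and $\mathcal B^+_{R/2}$.

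For property 2), the key point is that on the support of $\partial_t G_R$ we have $|x| - R \ge -R/8$ by Lemma \ref{gr}(2), hence $|x| \ge R - R/8 \ge R/2$, so $\frac{n-1}{|x|} \le \frac{2(n-1)}{R}$. Since $\partial_t G_R \ge 0$ (Lemma \ref{gr}(2)), we get
\[
\triangle \Phi_R = \triangle_{(t,y)} G_R + \frac{n-1}{|x|}\,\partial_t G_R \le \triangle_{(t,y)} G_R + \frac{2(n-1)}{R}\,\partial_t G_R \le 0
\]
by the first inequality in Lemma \ref{gr}(4). The boundary inequality on $y=0$ follows from the second inequality in Lemma \ref{gr}(4): when $|x|-R \notin [-1,1]$ the characteristic function $\chi_{[-1,1]}$ of that inequality vanishes, leaving $\partial_y \Phi_R < W'(\Phi_R)$ exactly. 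One must check these inequalities are preserved under the viscosity-sense reading, which is routine since composing with the smooth change of variables $t = |x|-R$ (away from $x=0$, and near $x=0$ the function is constant anyway) and adding a test function behaves well.

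For property 3), in the annular region $|(|x|-R, y)| \le R^{1/3}$ we again have $|x| \ge R - R^{1/3} \ge R/2$ for large $R$, and by Lemma \ref{gr}(3) here $G_R = H_R$, so $G_R$ is smooth with $|\triangle_{(t,y)} G_R| \le C/R$ (from the computation $\triangle H_R = -\frac{C_0}{R}(\frac{1}{(1+y)^2}+\frac{1}{1+y})$ in the proof of Lemma \ref{gr}) and $|\partial_t G_R| = |\partial_t H_R| \le C$ by \eqref{NG} and \eqref{21}. Therefore
\[
|\triangle \Phi_R| \le |\triangle_{(t,y)} G_R| + \frac{n-1}{|x|}|\partial_t G_R| \le \frac{C}{R} + \frac{2(n-1)}{R}\cdot C \le \frac{C}{R},
\]
and on $y=0$, $|\partial_y \Phi_R - W'(\Phi_R)| = |\partial_y H_R - W'(H_R)|$, which is bounded by $C\frac{\log R}{R}$ from the explicit formulas $\partial_y H_R = W'(G) + C_0\frac{\log R}{R}$, $H_R = G + C_0 C_1 \frac{\log R}{R} + C_0 t^2/R^2$ together with $W \in C^2$ and $|t| \le R^{1/3}$. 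I expect the only mildly delicate point to be bookkeeping the viscosity-sense statements through the change of variables and pinning down the geometric constants so that the stated radii ($2R$, $R/2$, $R^{1/3}$) come out cleanly; the analytic content is entirely inherited from Lemma \ref{gr}.
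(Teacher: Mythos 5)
Your proof is correct and follows essentially the same route as the paper: compute $\triangle \Phi_R = \triangle_{(t,y)}G_R + \frac{n-1}{|x|}\partial_t G_R$ with $t=|x|-R$, note that $\partial_t G_R$ vanishes unless $|t|<R/8$ (hence $|x|>R/2$ and $\frac{n-1}{|x|}\le \frac{2(n-1)}{R}$), and invoke properties 1)--4) of Lemma \ref{gr}. The paper's own proof is just a terser version of exactly this argument.
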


Let $\phi_R(x)=\Phi_R(x,0)$ denote the trace of $\Phi_R$ on $\{y=0\}$. 
Notice that $\phi_R$ is radially increasing, and $\{\phi_R=0\}$ is a sphere which is in a $C \log R /R$-neighborhood of the sphere of radius $R$.

\begin{proof}

We have
$$\triangle  \Phi_R(x,y)= \triangle  G_R(s,y)  + \frac{n-1}{R+s}\, \partial_s \, G_R(s,y), \quad \quad s=|x|-R,$$
$$\p_y \Phi_R(x,0)=\p_y G_R(s,0).$$
The conclusion follows from Lemma \ref{gr} since $\partial_s G_R=0$ 
when $|s| \ge R/8$ and $R+s>R/2$ when $|s| < R/8$.

\end{proof}

\begin{defn}\label{d1}
 We denote by $\Phi_{R,z}$ the translation of $\Phi_R$ by $z$ i.e.
$$\Phi_{R,z}(x,y):= \Phi_R(x-z,y)=G_R(|x-z|-R,y).$$

\end{defn}

{\it Sliding the graph of $\Phi_R$:}

Assume that $u$ is less than $\phi_{R,x_0}$ in $B_{2R}(x_0)$. 
By the maximum principle we obtain that $U< \Phi_{R,z}$ with $z=x_0$ 
in $\mathcal B_{2R}(x_0,0)$ (and therefore globally.)
We translate the function $\Phi_R$ above by moving continuously the center $z$, 
and let's assume that it touches $U$ by above, say for simplicity when $z=0$, 
i.e. the strict inequality becomes equality for some contact point $(x^*,y^*)$. 
From Lemma \ref{l1} we know that $\Phi_R$ is a strict supersolution away from $\{y=0\}$, 
and moreover the contact point must satisfy $y^*=0$, $|x^*| -R \in [-1,1]$, 
that is it belongs to the annular region $B_{R+1}\setminus B_{R-1}$ in the $n$-dimensional subspace $\{y=0\}$.

\begin{lem}[Estimates near a contact point] \label{l2}
Assume that the graph of $\Phi_R$ touches by above the graph of $U$ at a point $(x^*,0, u(x^*))$ with $x^* \in B_{R+1} \setminus B_{R-1}$. 

Then in $B_2(x^*,0)$
the level set $\{u=0\}$ stays in a $C \log R/R$ neighborhood of the sphere $\p B_R=\{|x|=R \}$, and 
$$\|u-\phi_R\|_{C^{1,1}(B_2(x^*))} \le C \, \frac{\log R}{R}.$$

\end{lem}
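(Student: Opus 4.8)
The plan is to exploit the contact of the supersolution $\Phi_R$ with $U$ at $(x^*,0)$ together with the tight control over $\Phi_R$ in the annular region supplied by Lemma \ref{l1}(3), and then run a standard compactness / elliptic regularity argument at unit scale. First I would record what the contact point gives: since $U<\Phi_R$ everywhere and equality holds at $(x^*,0)$, this is a one-sided touching of a subsolution-type function $U$ (solving \eqref{2min1}) by the supersolution $\Phi_R$ (which, by Lemma \ref{l1}(2), satisfies $\triangle\Phi_R\le 0$ and $\p_y\Phi_R<W'(\Phi_R)$ near the annulus, with the quantitative defect $|\triangle\Phi_R|\le C/R$, $|\p_y\Phi_R-W'(\Phi_R)|\le C\log R/R$ from Lemma \ref{l1}(3)). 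So in $\mathcal B_4^+(x^*,0)$ the nonnegative difference $w:=\Phi_R-U\ge 0$ vanishes at $(x^*,0)$ and satisfies $|\triangle w|\le C/R$ in $\mathcal B_4^+$, with Neumann-type data $\p_y w = \p_y\Phi_R - W'(U) = \big(W'(\Phi_R)-W'(U)\big) + O(\log R/R) = c(x)\,w + O(\log R/R)$ on $\{y=0\}$, where $c$ is bounded since $W\in C^2$.

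Next I would apply the boundary Harnack / Hopf-type estimate for such nonnegative supersolutions of the extension problem: a nonnegative function $w$ with these bounds that vanishes at an interior boundary point $(x^*,0)$ is controlled by its defect, giving $\|w\|_{C^{1,1}(\mathcal B_2^+(x^*,0))}\le C\log R/R$ — or more precisely $w \le C\log R/R$ on $\mathcal B_2^+$ by the maximum principle comparison with an explicit barrier (here one uses that $w$ is small somewhere: indeed in $\mathcal B_{R^{1/3}}$ around $x^*$ the function $\Phi_R$ still has a nondegenerate gradient in the radial direction, $|\nabla\phi_R|\ge c$ on $\{|\phi_R|\le 1/2\}$ by \eqref{NG}, so the zero level sets of $u$ and $\phi_R$ must stay within $C\log R/R$ of each other near $x^*$, which in turn forces $w$ to be small at some unit-scale point and then everywhere by Harnack). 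Once $0\le w\le C\log R/R$ in $\mathcal B_2^+$, I would feed this into interior and boundary Schauder-type estimates for the system \eqref{2min1}: since $U$ is a solution with bounded data $W'(U)$, it is uniformly $C^{1,1}$ at unit scale, and likewise $\phi_R$; then the a priori closeness of $U$ and $\Phi_R$ on $\{y=0\}$ in $C^0$, bootstrapped through the equation, upgrades to the claimed $\|u-\phi_R\|_{C^{1,1}(B_2(x^*))}\le C\log R/R$. The statement that $\{u=0\}\cap B_2(x^*)$ lies in a $C\log R/R$-neighborhood of $\p B_R$ then follows from the $C^1$ closeness together with the nondegeneracy $|\nabla\phi_R|\ge c$ near its zero set, via the implicit function theorem.

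The main obstacle, and the place where the borderline nature $s=\tfrac12$ shows up, is the logarithmic loss: the defect in the supersolution inequality is $O(\log R/R)$ rather than $O(1/R)$, because of the $\log R$ term built into $H_R$ in \eqref{hr} — this is exactly the "integral estimate instead of curvature estimate" phenomenon flagged in the introduction. So I must be careful that the barrier comparison and the Harnack step only lose multiplicative constants, not powers of $\log R$, so that the final bound is genuinely $C\log R/R$ and not something worse. Concretely, the delicate point is establishing that $w$ is already of size $C\log R/R$ at \emph{some} unit-scale point of $\mathcal B_2^+(x^*,0)$: the touching only gives $w(x^*,0)=0$, and one propagates smallness outward using that the two zero level sets cannot separate faster than the defect allows — this uses the one-sided inequality $U<\Phi_R$ globally (so the level sets are ordered) plus nondegeneracy of $\nabla\phi_R$. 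Everything else is routine elliptic regularity for the harmonic extension with an oblique/Robin-type boundary condition, carried out at the fixed unit scale where all relevant quantities are bounded.
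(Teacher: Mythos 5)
Your proposal is correct and follows essentially the same route as the paper: the nonnegative difference between the supersolution and $U$ vanishes at the contact point (so its infimum is zero, no separate level-set argument is needed), the Harnack inequality for the harmonic extension with a Neumann/Robin boundary condition propagates the $C\log R/R$ smallness to a unit neighborhood, and Schauder estimates upgrade this to $C^{1,1}$. The only cosmetic difference is that the paper first replaces $\Phi_R$ by the exact translated solution $V:=G\left(x_n-R+C\,\frac{\log R}{R},y\right)\ge \Phi_R$, so that $V-U\ge 0$ solves the homogeneous linearized system $\triangle (V-U)=0$, $\p_y(V-U)=b(x)(V-U)$, and the classical Harnack inequality applies directly without the inhomogeneous $O(\log R/R)$ defect terms that your version must carry through the Harnack and Schauder steps.
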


\begin{proof}

Assume for simplicity that $x^*$ is on the positive $x_n$ axis, thus $|x^*-Re_n| \le 1$. By Lemma \ref{l1} we have
$$U \le \Phi_R \le G \left (x_n-R + C \, \frac{\log R}{R},y \right)=:V \quad \quad \mbox{in} \quad \mathcal B_{3}(R e_n).$$
Both $U$ and $V$ solve the same equation \eqref{2min1}, and $$(V-U)(x^*,0) \le C\, \frac {\log R}{ R}.$$ Since $V-U \ge 0$ satisfies
$$\triangle (V-U)=0, \quad \quad \p_y (V-U)= b(x)(V-U), $$
$$ b(x):= \int_0^1 W''(tu(x) + (1-t)v(x)) dt,$$
we obtain $$|V-U| \le C \, \frac {\log R} {R} \quad \quad \mbox{in} \quad \mathcal B_{5/2}(R e_n),$$
from the Harnack inequality with Neumann boundary condition. Moreover since $b$ has bounded $C^{1,\alpha}$ norm, we obtain that $U-V \in C_x^{2,\alpha}$ for some $\alpha>0$, and 
$$\|U-V\|_{C^{1,1}(\mathcal B_2(Re_n))} \le C\, \frac{\log R}{ R},$$
by local Schauder estimates. This easily implies the lemma.

\end{proof}

\begin{rem}\label{r2}
If instead of $\Phi_R$ being tangent by above to $U$, we only assume $$ \Phi_R \ge U \quad \mbox{ and}  \quad (\phi_R-u)(x^*) = :a \le c $$ at some $x^* \in B_{R+1} \setminus B_{R-1}$ then the Harnack inequality above gives
$$ c a - C\,  \frac {\log R} {R} \le \Phi_R-U   \le C \left (a + \frac{\log R}{R} \right) \quad \quad \mbox{in} \quad \mathcal B_2(x^*,0),$$
for some $C$ large, universal. 
\end{rem}

\begin{lem}\label{l3}
Assume that 

a) $B_R(-Re_n) \subset \{u<0\}$ is tangent to $\{u=0\}$ at $0$. 

b) there is $x_0 \in B_{R/2}(-Re_n)$ such that $u(x_0) \le -1+c$ for some $c>0$ small. 

Denote by $D$ the set 
$$D:=\{u<0\} \setminus B_R(-Re_n).$$  Then
\be\label{225}
\int_{B_1^c} \frac {\chi_D(x)}{|x|^{n+1}} \, \, dx \le C \frac{\log R}{R},\ee
and $$\{u=0\} \cap B_{R^\sigma} \subset \{|x_n| \le R^{-3/4}\},$$
for some $\sigma>0$ small, universal.

\end{lem}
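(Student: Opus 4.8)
The plan is to split the two assertions and to derive the one--sided (upper) bound on the level set and the integral bound \eqref{225} from a single \emph{squeezing} statement for $\{u<0\}$ around the tangent ball, obtained by comparison with the axial supersolutions of Lemma \ref{l1}.

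\emph{The easy half, and the reduction.} We may take $\sigma<\tfrac18$. The lower bound on $\{u=0\}\cap B_{R^\sigma}$ uses only (a): since $B_R(-Re_n)\subset\{u<0\}$ we have $\{u=0\}\subset\mathbb R^n\setminus B_R(-Re_n)$, so for $x\in\{u=0\}$ with $|x|\le R^\sigma$ the inequality $(x_n+R)^2\ge R^2-|x'|^2$ gives $x_n\ge\sqrt{R^2-|x'|^2}-R\ge -|x'|^2/R\ge -R^{2\sigma-1}\ge -R^{-3/4}$ for $R$ large. Thus it remains to bound $\{u=0\}\cap B_{R^\sigma}$ from above and to prove \eqref{225}; both follow once one knows that, at each dyadic scale $1\le\delta\le R$, the part of $\{u<0\}$ in $B_\delta$ protrudes from $B_R(-Re_n)$ by at most a shell of thickness $\lesssim \delta^2/R+\log R/R$ over a piece of $\partial B_R(-Re_n)$ of surface measure $\lesssim\delta^{n-1}$ (plus the far part $D\cap B_R^c$).

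\emph{Control near $0$.} First one places an axial profile above $U$. Hypothesis (b) — $u(x_0)\le -1+c$ with $x_0\in B_{R/2}(-Re_n)\subset B_R(-Re_n)$ — together with the equation \eqref{2min1} and $B_R(-Re_n)\subset\{u<0\}$, allows a preliminary barrier comparison (of the type used in Lemma \ref{l2}) upgrading this to $u\le -1+C/R$ on a definite portion of $B_R(-Re_n)$, which lies below the trough value $-1+O(R^{-2/3})$ of $\phi_R$; hence $\Phi_{R,z}$ with $z$ near $-Re_n$ is admissible as an upper barrier near that ball, and sliding it as in the discussion preceding Lemma \ref{l2} until it first touches the graph of $U$ produces a contact point on $\{y=0\}$. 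By Lemma \ref{l1}(2) and the strong maximum principle this point lies in the thin transition annulus $\{\,|x-z^*|-R\in[-1,1]\,\}$ of the final translate, and since $0\in\{u=0\}$ is precisely the tangency point of $B_R(-Re_n)$ the center is pinned to $|z^*+Re_n|\le C\log R/R$ and the contact point to $B_2$. Lemma \ref{l2} then yields $\|u-\phi_{R,z^*}\|_{C^{1,1}(B_2)}\le C\log R/R$, so $\{u=0\}\cap B_2$ lies within $C\log R/R$ of $\partial B_R(-Re_n)$, whence $\{u=0\}\cap B_2\subset\{x_n\le C\log R/R\}\subset\{x_n\le R^{-3/4}\}$. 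A rolling--ball iteration (at each step one rolls a ball of radius $\approx R$ inside $\{u<0\}$ up to the slightly larger portion of $\{u=0\}$ now known to be flat, and reapplies the comparison, the $C\log R/R$ error being reproduced anew and not accumulating) extends this out to distance $R^\sigma$ from $0$, giving $\{u=0\}\cap B_{R^\sigma}\subset\{x_n\le R^{2\sigma-1}+C\log R/R\}\subset\{x_n\le R^{-3/4}\}$; together with the lower bound this is the second assertion.

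\emph{The integral estimate.} For \eqref{225} one has to control $D=\{u<0\}\setminus B_R(-Re_n)$ also at large scales. Here I would test the minimality of $U$ against the competitor $\min\{U,\Phi_{R,z^*}\}$, which equals $U$ outside $\mathcal B_{2R}(z^*,0)$; since $\Phi_{R,z^*}$ is superharmonic in $\mathbb R^{n+1}_+$ and a strict supersolution on $\{y=0\}$ off its transition annulus, the resulting energy inequality confines $\{U>\Phi_{R,z^*}\}\cap\{y=0\}$, up to the Neumann defect $C\log R/R$ localized on that annulus (Lemma \ref{l1}(3)), to a $C\log R/R$--neighborhood of $\partial B_R(z^*)$; combined with $\{u<0\}\supset B_R(-Re_n)$ this is exactly the squeezing asserted above. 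Consequently
\[
\int_{B_1^c}\frac{\chi_D(x)}{|x|^{n+1}}\,dx\;\lesssim\;\sum_{1\le\delta\le R}\delta^{-(n+1)}\cdot\delta^{\,n-1}\cdot\Big(\frac{\delta^2}{R}+\frac{\log R}{R}\Big)\;+\;\int_{B_R^c}\frac{dx}{|x|^{n+1}}\;\lesssim\;\frac{\log R}{R},
\]
where we used $\sum_{1\le\delta\le R}\tfrac1R\lesssim\tfrac{\log R}{R}$, $\sum_{\delta\ge1}\delta^{-2}<\infty$, and $\int_{B_R^c}|x|^{-n-1}\,dx\lesssim \tfrac1R$. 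This proves \eqref{225}.

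\emph{Main obstacle.} The crux, and the point where the argument departs from the range $s\in(\tfrac12,1)$ of \cite{S3}, is that the profiles of Section 2 are supersolutions only up to a Neumann defect of size $\log R/R$ (Lemma \ref{gr}(4), Lemma \ref{l1}(3)); hence the touching cannot be upgraded to a pointwise bound $R^{-1}$ on the curvature of $\{u=0\}$ at $0$, and yields only the $C^{1,1}$ closeness with error $C\log R/R$. The content of Lemma \ref{l3} is to convert this, via the squeezing and the convergent dyadic sum $\sum_\delta\delta^{-2}$, into the integrated bound \eqref{225}. Making the squeezing of $D$ rigorous — in particular controlling $\{U>\Phi_{R,z^*}\}$ across the transition annulus uniformly in the scale $\delta$, and arranging through (b) that a usable tangent configuration is available at every scale — is the technical heart of the proof.
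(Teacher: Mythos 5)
There is a genuine gap, and it sits exactly at what you call the ``technical heart'': the squeezing statement you rely on is both unobtainable by your method and false in the generality you need. Placing the supersolution $\Phi_{R,z^*}$ (or $\Phi_{R/2,t_0e_n}$) above $U$ gives $u\le \phi$, hence $\{\phi<0\}\subset\{u<0\}$; this bounds $\{u<0\}$ from \emph{inside}, and says nothing about how far $\{u<0\}$ can protrude outside $B_R(-Re_n)$. (Your set $\{U>\Phi_{R,z^*}\}$ is empty after the sliding, so the energy comparison with $\min\{U,\Phi_{R,z^*}\}$ is vacuous.) Moreover, confinement of $D$ to a shell of thickness $\delta^2/R+\log R/R$ at every scale $\delta$ is strictly stronger than the conclusion \eqref{225} and does not hold: a unit-sized component of $\{u<0\}$ at distance $\rho\gtrsim (R/\log R)^{1/(n+1)}$ from the origin is perfectly compatible with \eqref{225} and with all the hypotheses, so no comparison argument can rule it out. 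The actual mechanism in the paper is different and is the one ingredient your proposal is missing: after establishing $U\le\Phi:=\Phi_{R/2,t_0e_n}$ with $t_0=-R/2-K\log R/R$, one observes that $v:=\phi-u\ge0$, that $|\phi(0)|\le C\log R/R$ while $u(0)=0$, and that both $\phi$ and $u$ satisfy the equation at $0$ up to a $C\log R/R$ defect; hence $\triangle^{1/2}v(0)\le C\log R/R$. Since $v(0)$ and $\|v\|_{C^{1,1}(B_1)}$ are $O(\log R/R)$ by Lemma \ref{l2}, the integral representation of $\triangle^{1/2}$ yields directly
$$\int_{B_1^c}\frac{\phi-u}{|x|^{n+1}}\,dx\le C\,\frac{\log R}{R},$$
and one then replaces $\phi-u$ by $\chi_{\tilde D}$, $\tilde D:=\{u<0\}\setminus\{\phi<0\}\supset D$, by covering $\tilde D$ with unit balls and using the Harnack-type estimate of Remark \ref{r2} to show $c_1|\tilde D\cap B_1(z)|\le\int_{B_1(z)}(\phi-u)+C_1\log R/R$. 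This is an integrated, not geometric, control of $D$.

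Two further points. Your ``rolling-ball iteration'' for the upper bound $\{u=0\}\cap B_{R^\sigma}\subset\{x_n\le R^{-3/4}\}$ is unjustified as stated (you assert without proof that the $C\log R/R$ errors do not accumulate, and at each step you would need a fresh tangent ball of radius $\approx R$ inside $\{u<0\}$, which is not available from the hypotheses); in the paper this assertion is instead a \emph{consequence} of the integral estimate: if $(\phi-u)(z)>R^{-4/5}$ for some $z\in B_{R^\sigma}$, Harnack for $V=\Phi-U$ forces $\phi-u\gtrsim R^{-4/5}$ on a unit ball, contributing $\gtrsim R^{-4/5}R^{-(n+1)\sigma}$ to the left side of the displayed inequality and contradicting it for $\sigma$ small. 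Finally, your initial sliding step is essentially right (grow the radius using hypothesis b), then translate the center, with Lemma \ref{l2} preventing premature contact), but it is the starting point of the argument, not its conclusion.
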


We remark that in \eqref{225} we can integrate over whole $\R^n$ instead of $B_1^c$ since, by Lemma \ref{l2}, the curvatures of $\p D$ in $B_1$ are bounded by $C \log R/R$.

\begin{proof}
First we claim that 
\be\label{22}
U \le \Phi_{R/2, t_0 e_n}, \quad \quad \mbox{with} \quad t_0=-\frac R 2 - K \frac {\log R} {R},
\ee
for some $K$ large universal.

Let's assume first that $\Phi_{R/2,t e_n} \ge U$ when $t=-R$. We want to show that this inequality remains valid as we increase $t$ from $-R$ till $t_0$. By Lemma \ref{l1}, the first contact point between the graphs of $U$ and $\Phi_{R, t e_n}$ can occur only on $y=0$ and, by Lemma \ref{l2} near this contact point the $\{u=0\}$ and $|x+te_n|=R/2$ must be at most $C \log R/R$ apart. This is not possible if $K$ is chosen sufficiently large. 

To prove that $\Phi_{R/2,-R e_n} \ge U$, one can argue similarly by using hypothesis b) and looking at the continuous family $\Phi_{r, -Re_n}$ and then increase 
$r$ from $C$ to $R/2$. This proves the claim \eqref{22}.

We write $\Phi=\Phi_{R/2,t_0e_n}$ for simplicity of notation, and by $\phi$ the trace of $\Phi$ on $y=0$. We have $\Phi \ge U$, and therefore $\phi \ge u$, and
$$\triangle^{1/2}\phi \, (0) \le W'(\phi(0)) + C \frac{\log R}{R}.$$ 
Using that $$|\phi(0)| \le C(K) \log R/R, \quad \quad u(0)=0,$$ together with the equation for $ \triangle^{1/2} u$ at $0$ we obtain that
$$\triangle^{1/2}(\phi-u)(0) \le C(K) \frac{\log R}{R}.$$
Since $(\phi-u)(0)$ and $\|\phi-u\|_{C^{1,1}(B_1)}$ (by Lemma \ref{l2}) are bounded by $C \log R/R$, we use the integral representation for $\triangle ^{1/2}$ and obtain
\be\label{23}
\int_{B_1^c} \frac {\phi-u}{|x|^{n+1}}dx \le C \frac{\log R}{R}.
\ee
Next we show that we can replace $\phi-u$ in the integral above by $\chi_{\tilde D}$ where $$\tilde D:=\{u<0\} \setminus \{\phi<0\} \supset D.$$ For this it suffices to show that for any unit ball $B_1(z)$ with center $z \in \tilde D$ we have
$$c_1 \int_{B_1(z)} \chi_{\tilde D} dx \le  \int_{B_1(z)} (\phi-u) + C_1 \frac{\log R}{R} dx,$$
for some $c_1>0$ small, and $C_1$ large universal. 

Indeed, let $a=(\phi-u)(z)$. If $a>c$ then, by the Lipschitz continuity of $\phi-u$, the right hand side above is bounded below  by a universal constant and the inequality is obvious. If $a<c$ then we use Remark \ref{r2} and conclude that
$$|\tilde D \cap B_1(z) | \le C \left(a + \frac{\log R}{R} \right) ,$$
and $$\phi-u \ge ca - C \frac{\log R}{R} \quad \mbox{in} \quad B_1(z),$$
which gives the desired inequality by choosing $C_1$ sufficiently large.

Next we show that 
\be\label{24}
\varphi-u \le R^{-4/5} \quad \mbox{ in} \quad B_{R^\sigma},
\ee 
for some small $\sigma>0$. Assume by contradiction that
$$(\varphi-u)(z) > R^{-4/5} \quad \quad \mbox{for some} \quad z \in B_{R^\sigma}.$$
Let $V:=\Phi-U \ge 0$. We have $V(z,0) > R^{-3/4}$, and by part 3) of Lemma \ref{l1},  
$$  |\triangle V| \le \frac{C}{R}, \quad |\partial_y V| \le C V + C\,  \frac{\log R}{R} \quad \quad \mbox{in} \quad \mathcal B_2(z) .$$
By Harnack inequality we obtain
$$V \ge c R^{-4/5} \quad \mbox{in} \quad \mathcal B_{3/2}(z).$$
This means that the left hand side in \eqref{23} is greater than $c R^{-4/5} R^{-(n+1)\sigma}$ and we reach a contradiction if we choose $\sigma$ small depending only on $n$.
Hence the claim \eqref{24} is proved. This implies that in $B_{R^\sigma}$, the set $\{u=0\}$ is in a $CR^{-4/5}$ neighborhood of the $0$ level set of $\phi$ which gives the desired conclusion.

\end{proof}

\begin{rem}\label{r3}
In the proof above we obtain 
\be\label{25}
|\Phi-U| \le R^{-3/4}  \quad \mbox{in} \quad \mathcal B_{R^\sigma}.
\ee
Indeed, in $\mathcal B_{R^\sigma}$ we have
$$ V \ge 0, \quad   |\triangle V| \le \frac{C}{R}, \quad |\partial_y V| \le C R^{-4/5},$$
where in the last inequality we used that on $y=0$, $ V \le R^{-4/5}$ by \eqref{24}.
Now \eqref{25} follows from Harnack inequality provided that $\sigma$ is sufficiently small.
\end{rem}

As a consequence of Lemma \ref{l3} we obtain
\begin{cor}\label{c1}
Assume that $B_R(-Re_n) \subset \{u<0\}$ is tangent to $\{u=0\}$ at $0$. Then in the cylinder $\{|x'| \le R^\frac \sigma 6\}$ the set $\{u=0\}$ cannot lie above the surface
$$x_n = \frac 1 R \left( \Lambda (x \cdot e')^2 -|x'|^2 \right),$$
where $e'$ is a unit direction with $e' \cdot e_n=0$ and $\Lambda$ is a large universal constant. 
\end{cor}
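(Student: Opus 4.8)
The plan is to argue by contradiction using the barrier surfaces $\Phi_{\rho,z}$ together with the integral estimate \eqref{225} of Lemma \ref{l3}. Suppose that $\{u=0\}$ does lie above the graph of $x_n = R^{-1}(\Lambda(x\cdot e')^2 - |x'|^2)$ at some point $\bar x$ with $|\bar x'|\le R^{\sigma/6}$. The idea is to place a large ball of radius $\rho \sim \Lambda^{-1} R$ sitting below $\{u=0\}$ and tangent to it from the $\{u<0\}$ side near $\bar x$: because the paraboloid opens upward very steeply in the $e'$ direction (coefficient $\Lambda/R$) but only mildly downward in the remaining directions (coefficient $-1/R$), a ball of radius comparable to $R/\Lambda$ fits underneath the paraboloid on the scale $|x'|\le R^{\sigma/6}$, hence underneath $\{u=0\}$ too. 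Sliding this ball's barrier $\Phi_{\rho, z}$ upward (moving the center $z$ as in the ``sliding the graph'' discussion before Lemma \ref{l2}) until it first touches the graph of $U$, I obtain a contact point $x^{**}$ in the annular region $B_{\rho+1}(z)\setminus B_{\rho-1}(z)$, and by Lemma \ref{l2} the set $\{u=0\}$ stays within $C\log\rho/\rho$ of $\partial B_\rho(z)$ near $x^{**}$.

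Next I would apply Lemma \ref{l3} at this contact point, with $R$ there replaced by $\rho\sim R/\Lambda$. The tangent ball $B_\rho(-\rho e)\subset\{u<0\}$ is furnished by the barrier, and hypothesis b) of Lemma \ref{l3} (a point where $u\le -1+c$ deep inside the ball) is met since $\rho$ is large and the tangent ball lies well inside $\{u<0\}$; indeed the point $\bar x$ and the geometry force a large chunk of the ball $B_\rho$ to be inside $\{u<-1+c\}$ by the one-dimensional asymptotics \eqref{gg}–\eqref{NG} once we know $\{u=0\}$ is squeezed against $\partial B_\rho$. Lemma \ref{l3} then yields
$$
\int_{B_1^c}\frac{\chi_{D}(x)}{|x|^{n+1}}\,dx \le C\,\frac{\log\rho}{\rho},\qquad D=\{u<0\}\setminus B_\rho(z).
$$
The contradiction comes from bounding this integral from below: the paraboloid assumption says that at scale $|x'|\asymp R^{\sigma/6}$ the set $\{u=0\}$ bulges up by $\gtrsim \Lambda R^{\sigma/3}/R$, which on the one hand is much larger than $\log\rho/\rho \sim \log R/R$ (this is where $\Lambda$ being a large \emph{universal} constant is used), so the region between $\partial B_\rho(z)$ and $\{u=0\}$ has definite measure on the ball $B_{R^{\sigma/6}}$; more precisely $D$ contains a set of measure $\gtrsim R^{\sigma/3}\cdot R^{(n-1)\sigma/6}$ located at distance $O(R^{\sigma/6})$ from the origin, so that
$$
\int_{B_1^c}\frac{\chi_D}{|x|^{n+1}}\,dx \;\gtrsim\; \frac{R^{\sigma/3+(n-1)\sigma/6}}{R^{(n+1)\sigma/6}} \;=\; R^{\sigma/6},
$$
which blows up — contradicting the $O(\log R/R)$ bound. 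Choosing $\Lambda$ large and $\sigma$ small (as already fixed in Lemma \ref{l3}) makes all the comparisons go through.

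The main obstacle I anticipate is the bookkeeping in the middle step: verifying that a ball of radius $\rho\sim R/\Lambda$ genuinely fits below $\{u=0\}$ throughout the cylinder $\{|x'|\le R^{\sigma/6}\}$ and is tangent to it, and simultaneously checking hypothesis b) of Lemma \ref{l3} for that ball. One has to be careful that the downward curvature $-1/R$ of the paraboloid in the $x'$-directions does not prevent a ball of radius $\sim R/\Lambda \ll R$ from fitting — here the scale restriction $|x'|\le R^{\sigma/6}$ is essential, since on that scale the downward deviation is only $R^{\sigma/3-1}$, negligible compared with the ball's sagitta $R^{\sigma/3}/(2\rho)\sim \Lambda R^{\sigma/3-1}$. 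Once the geometry is set up correctly, invoking Lemma \ref{l3} and the lower bound on the singular integral is routine.
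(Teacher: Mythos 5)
Your overall instinct---play the paraboloid hypothesis against the integral estimate \eqref{225}---is the right one, and it is exactly what the paper does. But the execution has two genuine problems, and the second one is fatal as written.

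First, the detour through a new tangent ball of radius $\rho\sim R/\Lambda$ is not only unnecessary but counterproductive. The corollary is stated under the same hypothesis as Lemma \ref{l3} (the ball $B_R(-Re_n)\subset\{u<0\}$ tangent at $0$), so \eqref{225} is already available for $D=\{u<0\}\setminus B_R(-Re_n)$ with upper bound $C\log R/R$; no new sliding or contact-point argument is needed. If instead you re-run Lemma \ref{l3} with $R$ replaced by $\rho\sim R/\Lambda$, the upper bound becomes $C\log\rho/\rho\sim C\Lambda\log R/R$, i.e.\ it deteriorates by precisely the factor $\Lambda$ you are counting on to produce the contradiction, and the two sides end up comparable.

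Second, your lower bound on the integral is miscomputed. The bulge of $\{u=0\}$ above the sphere at scale $|x'|\asymp r$ has height $\sim\Lambda r^2/R$, so the portion of $D$ in that dyadic annulus has measure $\sim\Lambda r^{n}/R$ (you dropped the factor $1/R$ when passing from the height $\Lambda R^{\sigma/3}/R$ to the measure, and even with your numbers the exponent $\sigma/3+(n-1)\sigma/6-(n+1)\sigma/6$ equals $0$, not $\sigma/6$). A single scale therefore contributes only $\sim\Lambda/R$ to $\int\chi_D|x|^{-n-1}$, which does not blow up and does not contradict $C\log R/R$. The point of the paper's one-line proof is that every dyadic scale $r\in[1,R^{\sigma/6}]$ contributes $\sim\Lambda/R$, so summing (i.e.\ computing $\int_1^{R^{\sigma/6}}c\,\tfrac{\Lambda}{R}\,\tfrac{r^n}{r^{n+1}}\,dr$) yields $c(\sigma)\Lambda\log R/R$; the contradiction with \eqref{225} then comes from choosing the universal constant $\Lambda$ large enough to beat $C$, not from any quantity tending to infinity. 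You need both corrections---use the original ball, and accumulate the logarithm over all scales---for the argument to close.
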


\begin{proof}
Indeed, otherwise the integral in \eqref{225} is greater than
$$\int_1^{R^{\sigma/6}} c \,  \frac{\Lambda}{R} \,  \frac{r^n}{r^{n+1}} \, \, dr \ge c(\sigma) \Lambda \, \, \frac{\log R}{R},$$
and we reach a contradiction if $\Lambda$ is chosen sufficiently large.

\end{proof}

\section{A mean curvature estimate for $\{u=0\}$}

In this section we refine some of the results of last section and we estimate the mean curvature of a surface that touches $\{u=0\}$ by below at $0$, in a neighborhood of size $l \ge R^{1/3}$.
\begin{prop}\label{p1}
Fix $\delta>0$ small and let $R$, $l$ be large with $l \in [R^{1/3}, \delta^3 R]$, and let $\theta$ denote
$$\theta:=l^2 R^{-1}.$$
Assume that in the ball $B_l$ the surface 
$$\Gamma:=\left \{x_n= \sum_1^{n-1} \frac{a_i}{2} x_i^2 + b' \cdot x'  + b_0\right \},$$
with
$$|a_i| \le \delta R^{-1}, \quad |b'| \le \delta l R^{-1}, \quad |b_0| \le \theta, $$
is tangent to $\{u=0\}$ at $b_0 e_n$. 

Assume further that $\{u<0\}$ contains the two balls $B_{R}(-t_0 e_n)$ and $B_{R_m}(-t_m e_n)$ of radii $R$ and $R_m$ and passing through $-\theta e_n$ and respectively $-\theta_me_n$ with
$$t_0 =\theta + R, \quad \quad t_m= \theta_m + R_m, \quad \quad R_m:=2^{\frac 12 m}R, \quad  \theta_m:= 2^{\frac 32 m}\theta.$$   
Then
$$\sum_1^{n-1} a_i \le \delta^4 R^{-1},$$
if $m=m(\delta)$ is chosen sufficiently large depending only on $\delta$ and the universal constants.
\end{prop}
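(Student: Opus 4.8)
The plan is to upgrade the integral estimate of Lemma \ref{l3} into a quantitative upper bound for $\sum a_i$ by exploiting the family of balls $B_{R_m}(-t_m e_n)$, whose radii grow geometrically. First I would reduce to the case where all balls $B_{R_k}(-t_k e_n)$, $0 \le k \le m$, are actually tangent to $\{u=0\}$ near $0$: each such ball lies in $\{u<0\}$ and passes through a point at distance $O(\theta_k)$ below the origin, and since $\Gamma$ is tangent to $\{u=0\}$ at $b_0 e_n$ with the stated smallness of $a_i, b', b_0$, the surface $\{u=0\}$ near $0$ is trapped between $\Gamma$ and these spheres; sliding each sphere up until it touches $\{u=0\}$ moves its center by a controlled amount, so after this reduction we may apply Lemma \ref{l3} (and Corollary \ref{c1}) \emph{at the appropriate scale $R_k$} to the ball $B_{R_k}$. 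The key point is that at scale $R_k$ the relevant ``$\log R/R$'' error becomes $\log R_k / R_k \sim 2^{-m/2}\log R / R$ times something manageable, so summing over $k$ from some large $k_0(\delta)$ to $m$ the errors form a geometric series controlled by $\delta^4 R^{-1}$ once $m$ is large.

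Concretely, the second step is to run the sliding argument from the proof of Lemma \ref{l3}: for each $k$ one gets a comparison supersolution $\Phi^{(k)} := \Phi_{R_k/2,\, t^{(k)}e_n}$ with $\Phi^{(k)} \ge U$, $\phi^{(k)} \ge u$, and with the quantitative Harnack-type control $|\Phi^{(k)} - U| \le R_k^{-3/4}$ and $\|\phi^{(k)} - u\|_{C^{1,1}(B_1)} \le C\log R_k / R_k$ from Lemma \ref{l2} and Remark \ref{r3}. The $0$-level set of $\phi^{(k)}$ is, in $B_{R_k^\sigma}$, within $C R_k^{-4/5}$ of a sphere of radius $R_k/2$ centered near $-\tfrac{R_k}{2}e_n$; in particular, in a fixed ball $B_l$ (recall $l \le \delta^3 R \le \delta^3 R_k$, so $l$ is well inside $B_{R_k^\sigma}$ once $k$ is not too small relative to $l$, which is where the constraint $l \ge R^{1/3}$ and the choice of $m(\delta)$ enter), the level set $\{u=0\}$ lies \emph{below} the sphere $\{|x+t^{(k)}e_n| = R_k/2\}$ up to error $CR_k^{-4/5}$. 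Expanding this sphere to second order over $B_l$ gives that $\{u=0\}$ lies below a paraboloid of the form $x_n \le -\tfrac{1}{R_k}|x'|^2 + (\text{linear}) + (\text{error})$.

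The third step combines these upper barriers with the lower tangency from $\Gamma$. Since $\Gamma$ touches $\{u=0\}$ from below at $b_0e_n$ and $\{u=0\}$ lies below the $k$-th expanded sphere, comparing the quadratic parts at the $n-1$ points $\pm l\, e_i$ (or integrating against a suitable test paraboloid, exactly as in the proof of Corollary \ref{c1}) yields, for each admissible $k$,
$$
\sum_{i=1}^{n-1} a_i \;\le\; \frac{C}{R_k} + C\frac{\log R_k}{R_k \, \theta} \;=\; 2^{-m/2}\left(\frac{C}{R} + C\frac{\log R}{R\,\theta}\right)\cdot 2^{(m-k)/2},
$$
wait — more precisely one wants to take $k=m$, the \emph{largest} radius, for which $R_m = 2^{m/2}R$ makes $C/R_m = 2^{-m/2}C/R$, and the error term, divided by $l^2$ to convert a $C^{1,1}$-estimate on $\phi^{(m)}-u$ into a curvature comparison, is of order $R_m^{-4/5} l^{-2} + \log R_m/(R_m \theta)$; using $l^2 = \theta R$ and $R_m = 2^{m/2}R$ one checks each piece is $\le \tfrac12 \delta^4 R^{-1}$ provided $m \ge m(\delta)$. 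I would close the argument by noting that the hypotheses $|a_i| \le \delta R^{-1}$, $|b'| \le \delta l R^{-1}$, $|b_0|\le\theta$ guarantee $\Gamma$ and the spheres are close enough on $B_l$ that the sliding in step 1 only costs an $O(\theta)$ shift of centers, which is absorbed.

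\textbf{Main obstacle.} The delicate point is the bookkeeping of scales: the ball $B_{R_m}$ has radius $R_m = 2^{m/2}R \gg R$, so a priori $l$ need not be inside the good region $B_{R_m^\sigma}$ produced by Lemma \ref{l3} at scale $R_m$ — but in fact $R_m^\sigma \ge R^\sigma$ and $l \le \delta^3 R$, so $l \le R_m^\sigma$ requires $\delta^3 R \le R^\sigma$, which \emph{fails} for large $R$. The resolution is that one does not need $l$ inside $B_{R_m^\sigma}$; rather one only needs the \emph{paraboloid comparison} on $B_l$, and the relevant estimate there is the $C^{1,1}$ bound of Lemma \ref{l2}, valid on unit balls around any contact point in the annulus, upgraded to $B_l$ by Schauder/Harnack as in the proof of Proposition \ref{p1}'s analogue in \cite{S3} — so the real work is checking that the $C^{1,1}$ comparison between $u$ and $\phi^{(m)}$ propagates from scale $1$ to scale $l$ with the error still $\le \delta^4$ after division by $l^2$, which is exactly where the geometric gain $2^{-m/2}$ and the choice $m = m(\delta)$ are spent.
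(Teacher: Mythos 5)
Your proposal diverges fundamentally from the paper's argument, and it contains gaps that I do not see how to close. The paper proves Proposition \ref{p1} by contradiction via a sliding argument in the extension $\R^{n+1}_+$: assuming $\sum a_i > \delta^4 R^{-1}$, it constructs a genuine viscosity supersolution $\Psi = G^*(d_{\tilde \Gamma},y)$, where $G^*$ is a perturbation of the 2D profile $G$ satisfying $\triangle G^* - \frac{\delta^4}{8R}\partial_t G^* \le 0$ and $\partial_y G^* \le W'(G^*)$ on $\{y=0\}$, and $\tilde\Gamma$ is a bending of $\Gamma$ all of whose parallel surfaces have mean curvature at least $\frac 18 \delta^4 R^{-1}$; the contradiction hypothesis enters only through $\triangle \Psi = \triangle G^* - H_{\tilde\Gamma}\,\partial_s G^* < 0$, and the conclusion comes from the Neumann condition at an interior touching point on $\{y=0\}$ after sliding $\Psi$ downward. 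Crucially, the balls $B_{R_m}(-t_m e_n)$ with geometrically growing radii are used to dominate $U$ at large heights $y \ge l(\log R)^{-1/3}$ in the extension variable (inequalities \eqref{26} and \eqref{ggr4}), which forces $\Psi > U + c\gamma$ on the upper portion of $\partial \mathcal B_{l/4}$ and hence forces the contact point to be interior and on $\{y=0\}$. They are not used to produce curvature estimates via Lemma \ref{l3}, which is the role you assign to them.

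Two concrete gaps. First, your step 1 fails: the ball $B_{R_k}(-t_k e_n)$ passes through $-\theta_k e_n$ with $\theta_k = 2^{\frac 32 k}\theta$, so sliding it upward until it first touches $\{u=0\}$ can move it by an amount of order $\theta_k$, and since $R_k \gg l$ and $\{u=0\}$ is only constrained inside $B_l$, the first contact point need not be anywhere near the origin (already at height $x_n \approx 0$ it can sit at horizontal distance $\sim \sqrt{R_k\theta_k} = 2^k l$ from the axis). The conclusions of Lemma \ref{l3}, Remark \ref{r3} and Corollary \ref{c1} live in $B_{R_k^\sigma}$ \emph{around the contact point}, so they give no information about $\{u=0\}$ near $0$. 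Second, even granting a contact point at the origin, you correctly note that the pointwise bound $|u-\phi^{(k)}| \le R_k^{-4/5}$ holds only in $B_{R_k^\sigma}$, which is far smaller than $B_l \supset B_{R^{1/3}}$ since $\sigma$ is a small universal constant; but your proposed fix --- propagating the $C^{1,1}$ bound of Lemma \ref{l2} ``from scale $1$ to scale $l$ by Schauder/Harnack'' --- has no content, because away from the contact point one only has the one-sided inequality $u \le \phi^{(k)}$ together with the integral bound \eqref{225}. That integral bound controls the protrusion set weighted by $|x|^{-(n+1)}$, and at radius $|x'| \sim l$ it still permits a pointwise gap of order $l^2 \log R / R = \theta \log R$, which is larger by a factor $\log R/\delta^4$ than the precision $\delta^4\theta$ your paraboloid comparison at $\pm l e_i$ would require. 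In short, the machinery of Section 3 is the right tool for property b) of the rescaled level set in the Harnack inequality of Section 5, but it cannot by itself yield the mean curvature estimate of Proposition \ref{p1}; the supersolution construction in the extension is what does the work.
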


\begin{proof}
First we claim that at each point $x_0 \in \Gamma \cap B_{l/3}$ we have a tangent ball of radius $\frac {1}{16} R$ by below which is included in the set $\{u<0\}$. 

Indeed, the bounds on $|a_i|$, $|b_i|$ imply that at $x_0$, $\Gamma$ has a quadratic polynomial $$x_n=- \frac {8}{R} |x'-z_0'|^2 + c_{z_0}$$ tangent by below, with $$|x_0'-z_0'| \le C \delta l,  \quad \quad c_{z_0} \le 2 \theta. $$ It is straightforward to check that the quadratic surface above lies inside the ball $B_{R}(t_0 e_n)$ in the region $B_R \setminus B_l$, and our claim easily follows.

As in the proof of Lemma \ref{l2}, $B_{R_m}(t_m e_n) \subset \{u<0\}$ gives the bound
$$U \le \Phi_{R_m /2, t e_n},$$
as long as the ball $B_{R_m/2}(te_n)$ lies inside the ball
$$\tilde B:=B_{\tilde R}(t_m e_n), \quad \quad \tilde R:= R_m- C \frac{\log  R_m}{R_m},$$  
for some $C$ large, universal. This gives the bound
\be\label{26}
U \le G_{R_m/2} \left (d_m + C \log  R_m / R_m,y \right),
\ee
where $d_m$ denotes the signed distance to the sphere $ \p B_{R_m}(-t_m e_n)$, with $d_m>0$ outside the ball.

Similarly, we use that at each point in $\Gamma \cap B_{l/3}$ the tangent ball of radius $\frac {1}{16}R$ by below is included in $\{u<0\}$ and we obtain
\be\label{27}
U \le G_{R/32}(d_\Gamma + C \log R /R) \quad \mbox{in} \quad \mathcal B_{l/4},
\ee
where $d_\Gamma$ represents the signed distance to the the surface $\Gamma$.

We assume by contradiction that the conclusion is not satisfied i.e.
$$\sum a_i > \delta^4 R^{-1}.$$ Since on $\Gamma \cap B_l$ the slope of $\Gamma$ (viewed as a graph in the $e_n$ direction) is bounded by $C(n) \delta l R^{-1} \le \delta^2$ and $|a_i|\le \delta R^{-1}$ we obtain 
$$H_\Gamma \ge \sum a_i - C \delta^4 \max |a_i| \ge \frac 12 \delta^4 R^{-1},$$
where $H_{\Gamma}$ represents the mean curvature of $\Gamma$. Moreover, the curvatures of $\Gamma$ are bounded by $ 2 \delta R^{-1}$, which easily gives that in $B_l$ all parallel surfaces to $\Gamma$ satisfy a similar mean curvature bound:
\be\label{28}
H_\Gamma(x) \ge  \sum a_i- C l (\delta R^{-1})^2 \ge \frac 12 \delta^4 R^{-1}, \quad \quad \forall \, x \in B_l,
\ee
where we have used the hypothesis $l R^{-1} \le \delta^3$.
Here $H_\Gamma(x)$ denotes the mean curvature of the parallel surface to $\Gamma$ passing through $x$.
 
Next we use \eqref{28} to construct a supersolution with $0$ level set sufficiently close to $\Gamma$. Then we make use of \eqref{26}, \eqref{27} and reach a contradiction by showing that this supersolution touches $U$ by above at an interior point.  

For the construction of the supersolution we first introduce a 2D profile in the $(t,y)$ variables which is a perturbation of $G$. It is similar to the profile $H_R$ defined in \eqref{hr}. Precisely we define $H^*$ in $\R^2_+$ as  
\be\label{H*}
H^*(t,y):=G + \frac{c(\delta)}{R} h(t,y)
\ee
with
\be\label{h}
h(t,y):= c_1 \,  \varphi (2r) \, \, y \log r + \varphi(r)\, \frac{t^2-y^2}{r},
\ee
where $r=|(t,y)|$ is the distance from $(t,y)$ to the origin, and $\varphi$ is a cutoff function with $\varphi=0$ in $[0,1]$ and $\varphi=1$ in $[2,\infty)$. The constant $c_1$ is small, universal, and the constant $c(\delta)>0$ depends also on $\delta$ will be made precise below. Outside $\mathcal B_4^+$, the function $h$ has the property that  

a) $\triangle h$ is homogenous of degree $-1$ and 

b) on $y=0$, $h=|t|$ and $h_y=c_1 \log |t|$. 

The following properties hold provided that $c_1$ is sufficiently small:

1) $h$ is superharmonic in an angular region near the $t$ axis
$$\triangle h < 0 \quad \mbox{in the region} \quad \{y < |t|/2\} \setminus \mathcal B^+_4,$$

2) Outside this region we have the bounds (see \eqref{NG})
$$ \triangle h \le C \min\{1, r^{-1}\},  \quad \quad \partial _t H^* \ge c \min\{1,r^{-1}\},$$

3) on $y=0$ we have $h(t,0) \ge 0$ and 
$$ \mbox{$h_y =h=0$ in $ [-1,1]$ and $h_y \le c h$ outside $[-1,1]$,} $$
for some $c>0$ universal, smaller than the minimum of $W''$ outside the interval $[g(-1),g(1)]$ (see \eqref{W1}).

These properties imply that in the region where $\p_t H^* \ge 0$ we have
$$ \triangle H^* - \frac {\delta^4}{8} \frac 1 R\, \p_t H^* \le 0,$$
provided that $c(\delta)$ is chosen sufficiently small, and
$$ \p_y H^* \le W'(H^*) \quad \mbox{on $y=0$}.$$ 
 Next we modify the 2D profile $H^*$ by cutting at level 1 and making it increasing in the $t$ variable. We define $G^*$ as the infimum over left translations in a similar fashion as we did for $H_R$. Precisely, we define
 $$\bar H^* := \min\{H^*,1\}, \quad \quad G^*:= \inf_{l \ge 0} \bar H^*(t+l,y),$$
  and then $\partial _t G^* \ge 0$ by construction. Moreover, $G^*$ satisfies the inequalities above (in the viscosity sense):
   \be\label{285}
 \triangle G^* - \frac {\delta^4}{8} \frac 1 R \, \p_t G^* \le 0, 
 \ee
 and
 \be\label{286}
 \p_y G^* \le W'(G^*) \quad \mbox{on $y=0$.}
 \ee 
 
 In the next lemma we compare the profiles $G^*$ with appropriate translations of $G_{R/32}$ respectively $G_{R_m/2}$.
 
 \begin{lem}\label{ggr} We have the following inequalities:
 
 a) on $y=0$
 \begin{align}
 G^*(t,0) & \ge G_{R/32}(t-R^{-1/2},0) \\
 G^*(t,0)  & \ge G_{R/32} (t + R^{-1/2},0), \quad \mbox{if $|t|> R^{1/4}$}.
 \end{align}
 
 b) in $\R^2_+$ we have
 \begin{align}
\label{ggr3} G^*(t,y) & \ge G_{R/32}(t + K \log R/R,y) - C(K) \frac{log R}{R} (y+1),\\
\label{ggr4} G^*(t,y) & \ge G_{R_m/2}(t + 2 \theta_m,y) + c_1(\delta) \frac{log R}{R} y, \quad \mbox{if } \quad y \ge l ( \log R)^{- \frac 13}.
 \end{align}
 provided that $k=k(\delta)$ is chosen sufficiently large.
 \end{lem}
 
 \begin{proof}
 It suffices to show the inequalities for $H^*$ and $H_R$ in the regions where $\{\p _t H^*>0\} \cap \{ H^*<1\}$ and then the desired results for $G^*$ and $G_R$ follow by taking the infimum over left translations. 
First we check that
\be\label{287}
 \{\p _t H^*>0\} \cap \{ H^*<1\} \subset \{ r \le  R ( \log R)^{-1/3}\}.
\ee
We notice from \eqref{H*} that
\be\label{288}
h(t,y) \ge c (y \log r + r) \quad \mbox{outside $\mathcal B_C^+$.}
\ee
This means that if $y > CR/ \log R$ then $H^* >1$. In the two regions where $y < C R/\log R$ and $r> R (\log R)^{-1/3}$ we have

a) either $t> r/2$ and then we easily obtain $H^*>1$ by using (see \eqref{NG})  
$$G \ge 1 - C \frac{1+y}{r}$$

b) or $t < -r/2$ and we obtain $H^*_t <0$ by using
$$G_t \le C ( 1+y) r^{-2}, \quad \mbox{and} \quad h_t \le -c,$$
and \eqref{287} is proved.

To prove a) we have (see \eqref{h}, \eqref{hr}) 
 $$H^*(t,0) \ge g(t) +\left (1-\chi_{[-2,2]} \right )\frac{c(\delta)|t|}{R},$$
 $$H_{R/32}(t\pm R^{-1/2},0) \le g(t \pm R^{-1/2}) + C \frac{\log R}{R} + C \left(\frac t R \right)^2.$$
 The two inequalities follow easily since $|t|/R = o(1)$ by \eqref{287}, and by \eqref{gg} we have
 $$g(t-R^{-1/2},0) \le g(t) - c  \frac{R^{-1/2}}{1+t^2}, \quad \mbox{and} \quad g(t+R^{-1/2},0) \le g(t) + C  \frac{R^{-1/2}}{1+t^2}  .$$

 For part b) we estimate translations of $H_R$ as
$$H_R(t + \sigma,y) \le G(t + \sigma,y) + C \frac{\log R}{R} (y+1) + C\frac{t^2+\sigma^2}{R^2}  $$
and using that $G_t \le C/(y+1)$ we have
 $$H_R(t + \sigma,y) \le G(t,y) +  C \frac{\sigma}{y+1} + C \frac{\log R}{R} (y+1) + C \left(\frac t R \right)^2.$$

The third inequality is easily verified by taking $\sigma = K \log R / R $, $C(K)$ sufficiently large and then using \eqref{288} to estimate $H^*$.

Finally, for \eqref{ggr4} we take $\sigma=2 \theta_m=2^{\frac 32 m+1} \theta$ and
 replace $R$ with $\frac 12 R_m=2^{\frac 12 m-1}R.$ in the inequality above. 
 
We restrict to the region $y \ge  l (\log R)^{-1/3} \ge R^{1/4}$, thus we have $r \ge R^{1/4}$. We first choose $m$ large such that
$$\frac c 3 \frac{\log r}{ R} y \ge  C \frac{\log R_m}{R_m} (y+1),$$
 and then
$$\frac c 3\frac {\log r}{ R} y  \ge C \frac{2 \theta_m}{y+1},$$
for all large $R$'s, and the lemma is proved.

 \end{proof}
 
In the ball $\mathcal B_{l/4}$ we define the function 
$$\Psi:=G^*(d_{\tilde \Gamma},y)$$
where $d_{\tilde \Gamma}$ is the signed distance to the surface
$$\tilde \Gamma = \left \{x_n= \sum_1^{n-1} \frac{a_i}{2} x_i^2  - \frac{\delta^4}{4nR} |x'|^2+ b' \cdot x' \right\}. $$

From the properties of $G^*$ we find that $\Psi$ is a supersolution which is increasing in the $e_n$ direction. Indeed,
at a point $(x,y) \in \mathcal B_{l/4}$ we have (as in \eqref{28}) $$H_{ \tilde \Gamma}(x) > \frac 18 \delta^4 R^{-1},$$
and we compute (see \eqref{285},\eqref{286})
$$\triangle \Psi (x,y) = \triangle G^*(s,y) - H_{ \tilde \Gamma}(x) \partial_s G^*(s,y) <0. $$
where $s=d_{\tilde \Gamma}$. Also, on $\{y=0\}$ 
$$\partial_y \Psi =\partial_y G^*(s,0) \le W'(G^*) = W'(\Psi).$$

We claim that on $\{y=0\}$ we have
\be \label{psu}
\Psi > U \quad \mbox{outside} \quad  B_{l/8}.
\ee
To prove this, in view of \eqref{27}, it suffices to show that 
$$ G^*(d_{\tilde \Gamma},0) > G_{R/32} (d_1,0), \quad \quad d_1:=d_{\Gamma} + C \log R/R.$$
Indeed in $B_{l/4} \setminus B_{l/8}$ we either have 

a) $d_{\tilde \Gamma} > d_\Gamma + c l^2 R^{-1} > d_1 + R^{-1/2}$ or,

 b) $|d_{\tilde \Gamma}| \ge l/16 > R^{1/4}$ and $d_{\tilde \Gamma} \ge d_\Gamma > d_1 - R^{-1/2}$. 

The claim follows then from part a) of Lemma \ref{ggr} above.

Moreover, in $ \mathcal B_{l/4}$ we have $d_{\tilde \Gamma} \ge d_\Gamma $ hence by \eqref{ggr3}
$$\Psi > U - C \frac{\log R}{R} (y+1).$$
Since $$d_{\tilde \Gamma} + 2 \theta_m \ge d_m + C \log R_m /R_m,$$ (recall \eqref{26} for the definition of $d_k$) we find by \eqref{ggr4}, \eqref{26}, that
$$\Psi > U + c \frac{\log R}{ R} y \quad \mbox{if} \quad y > l \, \, (\log R)^{-\frac 13}  =: l \eps_R.$$

Thus on $\p \mathcal B_{l/4}$ we have 
$$\Psi > U - C \gamma \quad \mbox{if $y< l (2\eps_R)$, and} \quad \Psi> U + c \gamma \quad \quad \mbox{otherwise},$$
where $$\gamma:= \frac{ \log R}{R} (l \eps_R).$$

Next we translate the graph of $\Psi$ in the $-e_n$ direction till, on $y=0$ it becomes tangent by above to the graph of $U$. 
Indeed, since $\Psi(0)=U(0)$ and $\Psi >U$ outside $B_{l/8} \times \{0\}$, we can translate $\Psi$ so that $\Psi_0(X):=\Psi(X + t_1 e_n)$, for some $t_1 \ge 0$, becomes tangent by above to $U$ on $y=0$ at some point $(x^*,0) \in B_{l/8}$.  

Now we see that $V:= \Psi_0 - U \ge \Psi-U$ satisfies in $B_{l/4}$:
$$\triangle V \le 0, \quad V \ge 0 \quad \mbox{on} \quad \{y=0\}, \quad V (x^*,0)=0,$$
and on $\p \mathcal B_{l/4}$,
$$V \ge - C \gamma \quad \mbox{if $y< l (2\eps_R)$, and} \quad V \ge c \gamma \quad \quad \mbox{otherwise}.$$
Since $\eps_R$ can be taken arbitrarily small we find
$V \ge 0 $ in $\mathcal B_{3l/8}$ and therefore we also obtain $V_y(x^*,0)>0$. This means 
$$ U_y < \p_y \Psi_0 \le W'(\Psi_0)=W'(U) \quad \quad \mbox{ at $(x^*,0)$},$$ 
and we reached a contradiction. 

\end{proof}

\section{Harnack inequality}

In this section we prove a Harnack inequality property for flat level sets, see Proposition
\ref{TH} below. We will make use of Lemma \ref{l3} and Corollary \ref{c1} together with a standard $\Gamma$-convergence result for minimizers, see Lemma \ref{l1conv}.

\

{\it Notation:} We denote by $\mathcal C(l, \theta)$ the cylinder
$$\mathcal C(l,\theta):= \{|x'| \le l, \quad |x_n| \le \theta\}.$$

\begin{prop}[Harnack inequality for minimizers]{\label{TH}}

Let $U$ be a minimizer of $\mathcal J$ and assume that
$$ \{u=0\} \cap \mathcal C(l,l) \subset \mathcal C(l,\theta),$$
and that the balls of radius $C'l^2 \theta^{-1}$ (with $C'$ universal) which are tangent to 
$\mathcal C(l,\theta)$ at $\pm \theta e_n$ by below and above are included in $\{u<0\}$ respectively $\{u>0\}$.

Given $\theta_0>0$, there exist $\varepsilon_0(\theta_0) >0$ depending on $\theta_0$, such that if
\be\label{e4}
\theta l^{-1} \le \varepsilon_0(\theta_0), \quad \theta_0 \le \theta,\ee
then
$$ \{u=0\} \cap \mathcal C(\frac l 4, \frac l 4)$$ is either included in $\{x_n \le (1-\omega_0)\theta \}$ or in $\{ x_n \ge -(1-\omega_0) \theta\}$, with  $\omega_0>0$ small universal.
\end{prop}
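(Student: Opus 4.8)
The plan is to prove the Harnack inequality by a compactness argument, exactly in the spirit of De Giorgi's improvement-of-flatness scheme adapted to the nonlocal phase transition. I would argue by contradiction: suppose there is a sequence $U_k$ of minimizers, with parameters $l_k$, $\theta_k$ satisfying $\theta_k l_k^{-1} \to 0$ and $\theta_k \ge \theta_0$, for which the conclusion fails. After a vertical rescaling in the $x_n$-direction by $\theta_k^{-1}$ and a horizontal rescaling by $l_k^{-1}$, the level sets $\{u_k=0\}$ lie in the slab $\{|x_n| \le 1\}$ inside the unit cylinder, and one expects them — via the $\Gamma$-convergence result for minimizers quoted as Lemma \ref{l1conv} — to converge (in the Hausdorff sense, on compact subsets) to a limiting set which is the graph of a function $w(x')$ of the transverse variables. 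The balls of radius $C'l_k^2\theta_k^{-1}$ tangent from above and below, after rescaling, have curvature bounded by $C\theta_k l_k^{-1} \to 0$, so in the limit $w$ is trapped between its tangent paraboloids of vanishing opening at every point; together with minimality this should force $w$ to be harmonic (a solution of the linearized, i.e. minimal-surface, equation) in $B_{1/2}'$, with $|w| \le 1$.

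Next I would feed in the one-sided estimates from Section 3. The key inputs are Lemma \ref{l3} and Corollary \ref{c1}: at a point of $\{u=0\}$ that has a large tangent ball from one side, not only is the level set trapped in a logarithmically thin neighborhood of the corresponding sphere, but more importantly Corollary \ref{c1} gives a genuine one-sided restriction — $\{u=0\}$ cannot lie above the saddle-shaped surface $x_n = R^{-1}(\Lambda(x\cdot e')^2 - |x'|^2)$. Rescaled, this says precisely that the limiting function $w$ cannot be touched from below by a paraboloid whose sum of pure second derivatives along the transverse directions is too positive; applying Corollary \ref{c1} at both $\pm\theta e_n$ (and along every transverse direction $e'$) yields that $w$ satisfies the mean-curvature inequality from both sides, hence is a viscosity, therefore classical, solution of $\Delta' w = 0$ on $B_{1/2}'$, with $\|w\|_{L^\infty} \le 1$. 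By the interior gradient and Harnack estimate for harmonic functions, $\mathrm{osc}_{B'_{1/4}} w \le 2 - 2\omega_1$ for a universal $\omega_1>0$, i.e. $w$ stays in a sub-slab of relative width $(1-\omega_0)$ over $B'_{1/4}$. This contradicts the assumption that the conclusion fails for every $k$ once $k$ is large, since Hausdorff convergence of $\{u_k=0\}$ to the graph of $w$ forces $\{u_k=0\}\cap \mathcal C(l_k/4, l_k/4)$ into the corresponding sub-slab for large $k$.

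The main obstacle I anticipate is making the convergence step rigorous when $s=\tfrac12$, which is exactly the point the author flags in the introduction: unlike the cases $s\in(\tfrac12,1)$, a tangent ball of radius $R$ does not control the curvatures of $\{u=0\}$ pointwise by $R^{-1}$; one only has the integral estimate \eqref{225}. So the deduction "rescaled level sets converge to the graph of a harmonic function" cannot simply quote a clean curvature bound — one must run the compactness through the $\Gamma$-convergence of the energies $\mathcal J$ (Lemma \ref{l1conv}), extract a limiting minimal surface, use flatness to write it as a Lipschitz graph, and only then use the integral/one-sided estimates of Section 3 (in their rescaled forms) to upgrade the limiting equation to $\Delta' w=0$. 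Care is also needed that the hypotheses of Lemma \ref{l3} and Corollary \ref{c1} are met along the sequence — i.e. that the large tangent balls of radius $C'l_k^2\theta_k^{-1}$ indeed play the role of the balls $B_R(-Re_n)$, with $R\sim l_k^2/\theta_k \to\infty$, and that condition b) of Lemma \ref{l3} ($u$ close to $-1$ deep inside the ball) holds, which follows from the minimality and the energy bound $\mathcal J(G,B_R^+)=C^*\log R + O(1)$ once $\{u=0\}$ is trapped in a thin slab.

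A secondary technical point is bookkeeping the vertical-versus-horizontal anisotropic rescaling together with the trace term: after rescaling the extension $U_k(x,y)$ appropriately, the energy $\mathcal J$ transforms with the logarithmic weight visible in \eqref{gg} and in the $C^*\log R$ expansion, so the $\Gamma$-limit functional is the (anisotropic) Dirichlet energy of $w$, i.e. the linearized area functional; I would organize this exactly as in \cite{S2,S3}, indicating the modifications forced by $s=\tfrac12$ rather than redoing the argument from scratch.
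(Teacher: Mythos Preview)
Your proposal has a genuine circularity. You want to run a compactness argument: rescale anisotropically, extract a limit graph $w$, show $w$ is harmonic via Corollary~\ref{c1}, and then use the classical Harnack inequality for harmonic functions. But the step ``the rescaled level sets converge to the graph of a function $w$'' is precisely what Proposition~\ref{TH} is needed for. Without it, nothing prevents $\{u_k=0\}$ from carrying several sheets in the slab, and then no single-valued limit $w$ exists. In the paper this compactness is carried out later, in the proof of Theorem~\ref{c1alpha} (Claim~1), and there the Hölder modulus for the limit is obtained by \emph{iterating} Proposition~\ref{TH}; so you are effectively assuming the result you want to prove. Your appeal to Lemma~\ref{l1conv} does not rescue this: that lemma is only an energy \emph{upper} bound, not a $\Gamma$-convergence statement, and in any case $\Gamma$-convergence to minimal surfaces uses isotropic rescaling, whereas your blow-up is anisotropic ($l_k^{-1}$ in $x'$, $\theta_k^{-1}$ in $x_n$).

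The paper's proof avoids this circularity by a direct energy comparison rather than compactness. After the anisotropic rescaling to the set $A$, Lemma~\ref{l3} and Corollary~\ref{c1} are used (this part you identified correctly) to verify that $A$ satisfies the hypotheses of the abstract Weak Harnack inequality from~\cite{DS}. That result then says: if the conclusion fails, $A$ contains two graphs $\underline A$, $\overline A$ in $\{z_n\le -\tfrac12\}$ and $\{z_n\ge \tfrac12\}$ whose $z'$-projections each cover most of $B'_{1/4}$. On the large overlap of these projections, the 2D slice of $U$ in the $(x_n,y)$ variables then has \emph{two} transitions at distance $\ge \theta_0$, and Lemma~\ref{l4.5}\,b) gives a strict energy excess $\mathcal J \ge (C^*+c_0)\log l$ on such slices. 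Integrating in $x'$ yields a lower bound that contradicts the upper bound of Lemma~\ref{l1conv}. So the mechanism is ``two sheets cost too much energy,'' not ``pass to a limit and linearize.'' Your outline is essentially the proof of Theorem~\ref{c1alpha}, applied one step too early.
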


After a translation in the $e_n$ direction, the conclusion can be stated as 
$$\{u=0\} \cap \mathcal C(\bar l, \bar l)\subset \mathcal C (\bar l, \bar \theta) \quad 
\mbox{ with} \quad \bar l:=\frac l 4, \quad \bar \theta:= (1- \frac {\omega_0}{2}) \theta.$$
 We remark that if \eqref{e4} is satisfied again for $\bar \theta$, $\bar l$, then we can apply Proposition \ref{TH} again since the hypothesis that the tangent ball of radius $C' {\bar  l}^2 {\bar \theta}^{-1}$ tangent by below to $\mathcal C(\bar l, \bar \theta)$ is included in $\{u<0\}$ is clearly satisfied.  

Recall that $G(t,y)$ has the property 
\be \label{c*}
\mathcal J(G, B_R^+) = C^* \, \log R + O(1),
\ee
for some constant $C^*>0$. Moreover, $G$ is a minimizer of $\mathcal J$ in $B^+_R$ among functions with values between $-1$ and $1$ which agree with $G$ on $\p B_R^+ \setminus \{y=0\}$.

Before we proceed with the proof of Proposition \ref{TH} we need some energy bounds for functions defined in 
half-squares
 $$Q_l:=[-l,l] \times [0,l].$$

\begin{lem}\label{l4.5}
a) Assume that $V$ is Lipschitz, defined in $Q_{l+1} \subset \R^2_+$, $|V| \le 1$ and 
\be\label{40.1}
\mbox{$V(t,0) \le -1+ \gamma^2$ if $t \le -\frac l 2$, and $V(t,0) \ge 1-\gamma^2$ if $t \ge \frac l2$} 
\ee
for some small $\gamma$.
Then for all sufficiently large $l$ we have 
\be\label{40}
\mathcal J(V,Q_l) \ge (C^* - \gamma)\log l.
\ee

b) Moreover, if we assume that there exist two points $s_1$, $s_2$ in $[-l/2.l/2]$ with $|s_1-s_2| \ge \theta_0$, such that
$$|V(t+s_i,y)-G(t,y)| \le c(\theta_0) \quad \mbox{in} \quad \mathcal B^+_{l^\sigma} ,$$
for some given $\sigma>0$ small and $c(\theta_0)$ sufficiently small, then
\be\label{41}
\mathcal J(V,Q_l) \ge (C^* + c_0(\sigma))\log l,
\ee
for some constant $c_0(\sigma)>0$.

\end{lem}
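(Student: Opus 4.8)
The plan is to compare $V$ with the one-dimensional profile $G$ by a slicing/localization argument in the $t$ variable. For part a), the key point is that $G$ is the minimizer of $\mathcal J$ in half-balls among competitors with values in $[-1,1]$ matching $G$ on the lateral boundary, together with the logarithmic energy bound \eqref{c*}. First I would reduce to controlling the energy on dyadic half-annuli $\mathcal A_j := (\mathcal B^+_{2^{j+1}} \setminus \mathcal B^+_{2^j}) \cap Q_l$ for $1 \le 2^j \le cl$. On each such annulus, the trace of $V$ on $\{y=0\}$ is close to $\pm 1$ outside the strip $|t| \le l/2$ by \eqref{40.1}, so $V$ restricted there looks, up to small error in $\gamma$, like an admissible competitor for the minimization problem defining $G$ on that scale. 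Hence $\mathcal J(V,\mathcal A_j) \ge \mathcal J(G,\mathcal A_j) - C\gamma^2 \cdot(\text{something controlled})$; summing the logarithmically-many annuli and using $\mathcal J(G,Q_l) = \frac12 C^* \log l + O(1)$ (the half-space version of \eqref{c*}, accounting for the symmetry $t \mapsto -t$ — I would absorb the factor into the definition of $C^*$ as the paper implicitly does) yields \eqref{40} after choosing $l$ large to beat the $O(1)$ term against $\gamma \log l$.

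The cleaner route, which I would actually pursue, is a one-shot comparison: extend $V$ from $Q_l$ to a competitor $\tilde V$ on $\mathcal B^+_{cl}$ by gluing to $G$ (or to $\pm 1$) in an annular transition layer of width comparable to $l$, where the gluing costs only $O(\gamma^2 \log l) + O(1)$ in energy because the traces already agree to order $\gamma^2$ near the boundary and $|G| \ge 1 - Cr^{-1}$ is close to $\pm 1$ there. Then $\mathcal J(\tilde V, \mathcal B^+_{cl}) \ge \mathcal J(G_{\text{shifted}}, \mathcal B^+_{cl}) = C^* \log l + O(1)$ by minimality of $G$, and subtracting the transition-layer cost gives $\mathcal J(V,Q_l) \ge C^*\log l - C\gamma^2 \log l - C \ge (C^* - \gamma)\log l$ for $l$ large. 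The main obstacle here is handling the matching on $\{y=0\}$: the minimality of $G$ is among functions with prescribed lateral Dirichlet data on $\partial \mathcal B^+_R \setminus \{y=0\}$ but free on $\{y=0\}$, so I must make sure the glued competitor still has admissible (i.e. $[-1,1]$-valued) trace and correct lateral data, which is why the hypothesis \eqref{40.1} is phrased on the trace. A cutoff in $r$ multiplying $(V - G_{\text{shift}})$ does the job, with the gradient-of-cutoff term contributing $\int |\nabla \eta|^2 |V-G|^2 \lesssim \gamma^2$ per dyadic scale since $|V - G| \lesssim \gamma^2 + Cr^{-1}$ on the relevant region.

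For part b), the extra gain $c_0(\sigma)\log l$ comes from the fact that $V$ now resembles a \emph{two-transition} configuration: near $s_1$ and near $s_2$ it is $\mathcal B^+_{l^\sigma}$-close to a copy of $G$, and these two transition regions are separated by distance $\ge \theta_0$. The plan is to use the sharp energy expansion more carefully: the energy of a single clean transition on scale up to $l^\sigma$ is $C^* \sigma \log l + O(1)$, but having \emph{two} nearly-disjoint transition layers, each carrying a definite fraction of that logarithmic energy on its own dyadic range of scales (those between $\theta_0$ and $l^\sigma$ centered at $s_i$), forces a strict surplus over the single-transition cost $C^*\log l$. Quantitatively, I would localize to the two disjoint half-balls $\mathcal B^+_{l^\sigma}(s_i e_1 \text{-type center})$, on each of which $\mathcal J(V, \cdot) \ge C^* \sigma \log l - C(\theta_0)$ by comparison with $G$ as in part a); adding these two contributions to the "outer" energy (scales between $l^\sigma$ and $cl$, which already see a full transition and contribute $\ge C^*(1-\sigma)\log l - C$) gives total $\ge C^*(1 + \sigma)\log l - C(\theta_0) \ge (C^* + c_0(\sigma))\log l$ with $c_0(\sigma)$ a fixed positive multiple of $\sigma$, for $l$ large depending on $\theta_0$ and $\sigma$. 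The delicate point — and the real obstacle — is ruling out cancellation: one must argue that the two inner energies and the outer energy genuinely add, i.e. the transition near $s_2$ is not somehow "paid for" by the same logarithmic tail that pays for $s_1$; this is handled by the geometric separation $|s_1 - s_2| \ge \theta_0$, which guarantees the inner half-balls are disjoint and sit at comparable, non-overlapping ranges of $|x - s_i e_1|$, so their energies are literally disjoint pieces of the integral defining $\mathcal J(V,Q_l)$.
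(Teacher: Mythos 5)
Part a) of your proposal is, in its ``cleaner route'' form, essentially the paper's argument: the paper interpolates $H=(1-\varphi)G+\varphi V$ with a cutoff adapted to a cone $\mathcal R$, uses the minimality of $G$ in $Q_l$ among $[-1,1]$-valued functions with the same lateral data, and bounds the gluing cost by $C(\gamma)+C\gamma^2\log l\le\frac\gamma2\log l$ using the estimates \eqref{40.0} in the region $S$. Two small points. First, you should begin by replacing $V$ with the constrained minimizer having the same lateral data and the same constraint \eqref{40.1} on the trace (the paper does this): the interior bounds $|V-G|\le C\gamma^2$, $|\nabla(V-G)|\le C\gamma^2y^{-1}$ in the bulk of the gluing region do not follow from the trace condition alone for an arbitrary Lipschitz $V$. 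Second, there is no factor $\frac12$ to absorb: \eqref{c*} is already stated on half-balls of $\R^2_+$, so $\mathcal J(G,Q_l)=C^*\log l+O(1)$.

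Part b) has a genuine gap: the claim that the ``outer'' energy, at scales between $l^\sigma$ and $cl$, contributes at least $C^*(1-\sigma)\log l-C$ is asserted but not proved, and it is precisely the hard point. The lower bound of part a) comes from a global comparison with $G$ (one needs matching lateral data on all of $\p Q_l$), so it does not localize to $Q_l\setminus\bigl(\mathcal B^+_{l^\sigma}(s_1)\cup\mathcal B^+_{l^\sigma}(s_2)\bigr)$: on the lateral boundary of that region you control $V$ only near $\pm l$ and near the two balls, while in between the two transition layers $V$ must in fact return from $+1$ to $-1$ at an unknown location, so the outer region contains a third transition and no comparison function is available. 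Nor can you gain by local surgery inside $\mathcal B^+_{l^\sigma}(s_2)$: the lateral data of $V$ on $\p\mathcal B^+_{l^\sigma}(s_2)$ already forces energy of order $C^*\sigma\log l$ there, so deleting that transition requires a global modification. The paper sidesteps all of this with the monotone increasing rearrangement $\bar V$ of $V$ in the $t$ variable: with $T:=\{y\ge|t-s_1|\}\cap\mathcal B_{l^\sigma/2}$ one has $\mathcal J(V,Q_l)\ge\mathcal J(\bar V,Q_l)+\int_TV_t^2\,dtdy$, because the second transition at $s_2$ guarantees that every level attained by $V$ over $T$ is attained again at the same height outside $T$ (here one uses that $|s_1-s_2|\ge l^\sigma$, which follows from $|s_1-s_2|\ge\theta_0$ and the closeness to $G$ once $c(\theta_0)$ is small); then $\bar V$ still satisfies the hypotheses of part a), and $\int_TV_t^2\ge\int_1^{l^\sigma/4}c\,y^{-1}\,dy\ge2c_0(\sigma)\log l$ by the closeness of $V$ to $G(\cdot-s_1,\cdot)$ together with \eqref{NG}. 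If you wish to keep your additive decomposition you must supply the localization of the logarithmic lower bound yourself; the rearrangement argument is the route that closes.
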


The proof of Lemma \ref{l4.5} is postponed till the end of this section.

\

{\it Proof of Proposition \ref{TH}}

First we remark that $ l \ge \theta_0 \eps_0^{-1} \to \infty$ as $\eps_0 \to 0$.

Let $A$ be the rescaling of the $0$ level set of $u$ given by
$$ (x',x_n) \in \{ u =0 \} \mapsto (z',z_n) \in A$$
$$ z= T x, \quad (z', z_n)= T(x',x_n):=(x' \, l^{-1}, x_n \theta^{-1}).$$
Our hypothesis is that $A \subset \mathcal C(1,1)$ and we want to show that in the cylinder $|z'| \le \frac 1 4$ the set $A$ is included 
either in $z_n \le 1-\omega_0$ or in $z_n \ge -1+\omega_0$. 

We view $A$ as a multivalued graph over $z' \in B_1'$. 

Let us assume that we touch $A$ by below at a point $z_0 \in A$ with the graph of a quadratic polynomial $P_{p'}^\mu$ 
of opening $- \mu$ and vertex $p' \in B_{1/3}'$
$$z_n= P^\mu_{p'}(z'):= - \frac{\mu}{2} |z'-p'|^2 + c_{p'},  \quad \quad \mbox{for some constant} \quad c_{p'} \le -1 + \frac \mu 8,$$
and $\mu \in [\omega_0,1]$ with $\omega_0$ a small universal constant to be specified later. 

We claim that Lemma \ref{l3} and Corollary \ref{c1} imply that $A$ satisfies the following two properties:

a) $A$ contains a graph which is fully included in the cylinder $z_0 + \mathcal C (l^{\sigma-1},2\mu).$

b) $A$ cannot be touched at $z_0$ in a $B_{r_0}(z_0)$ neighborhood with $$r_0:=l^{\sigma /2 -1},$$ by the graph 
$$\left \{z_n=P^\mu_{p'} + 4 \mu \Lambda ((z'-z_0') \cdot e')^2, \quad |z'-z_0'| \le r_0 \right \}, $$
with $e' \in \R^{n-1}$ a unit direction. Here $\sigma$ and $\Lambda$ represent the universal constants that appear in  Lemma \ref{l2} and Corollary \ref{c1}.

Indeed, the restrictions on $p'$ and $c_{p'}$ imply that $|z_0'| \le 5/6$ and $z_0 \cdot e_n \le -1 + \frac \mu 8$.
The corresponding point 
$$\mbox{$x_0 :=T^{-1} z_0$ then satisfies $|x_0'| \le 5l/6$, $x_0 \cdot e_n \le (-1 + \frac \mu 8) \theta$. }$$
Moreover, if the constant $C'$ in our hypothesis is chosen large depending on $\omega_0$, 
then the ball of radius $$q=l^2 (2 \mu \theta)^{-1} $$ which is tangent to $\{u=0\}$ at $x_0$ by below is globally included in $\{u<0\}$.
The outer normal $\nu$ to this ball at $x_0$ satisfies $|\nu- e_n| \le \mu \theta l^{-1}$ and Lemma \ref{l3} implies that
\be\label{43}
\{u=0\} \cap B_{q^\sigma}(x_0) \subset \{|(x-x_0) \cdot \nu| \le q^{-3/4}\}.
\ee
We use that $ q \ge c l \eps_0^{-1} \ge C l$ provided that $\eps_0$ is chosen small, and $$q^{-3/4}\le l^{-3/4} \le \omega_0 \theta_0 \le \mu \theta,$$ 
and property a) above follows by rescaling back \eqref{43} to the $z$ variable.

Property b) holds since otherwise, as above, we end up at the point $x_0$ 
with a surface as in Corollary \ref{c1} tangent to $\{u=0\}$ by below in a $\frac 12 l^{\sigma /2}$- neighborhood of $x_0$. 
This neighborhood includes $B_{q^{\sigma/6}}(x_0)$ since $l^3 \ge C l^2 \ge C q$ and we reach a contradiction and the claim is proved.

By Remark \ref{r3} we obtain in \eqref{43} also information on the whole profile $U$
$$|U-G((x-x_0)\cdot \nu,y)| \le q^{-3/4} \quad \mbox {in} \quad \mathcal B_{q^\sigma}^+(x_0).$$
This implies that for each $x \in B_{l^\sigma}(x_0) \cap \{u=0\}$ we have 
\be\label{44}
|U(x+t e_n ,y)-G(t,y)| \le C l^{-3/4} +C \theta l^{-1} \le \rho(\eps_0),
\ee
if $|(t,y)| \le l^\sigma$, and $\rho(\eps_0) \to 0 $ as $\eps_0 \to 0$. In the inequality above, we used 
$$|(x+te_n -x_0) \cdot \nu - t | \le |(x-x_0)\cdot \nu| + |\nu - e_n||t| \le q^{-3/4} + \theta l^{-1} |t|,$$
and \eqref{NG}.

Properties a) and b) above state that $A$ satisfies the hypotheses for the general version of Weak Harnack Inequality proved in \cite{DS}. 
Indeed, by property b) the set $A$ cannot be touched by below with the 
family of surfaces $\overline{\mathcal P} ^\mu_{8 \Lambda}(r_0)$, $\mu \in [\omega_0,1]$, 
in an neighborhood that contains at least a ball of radius $r_0$ around the contact point. 
On the other hand, by property a) Harnack inequality already holds in a 
$C^*(n,\Lambda) r_0 \le l^{\sigma-1}$ neighborhood of a contact point, 
with $C^*(n,\Lambda)$ the universal constant depending only on $n$ and $\Lambda$ 
which appears in Proposition 1.4 in \cite{DS}. 
Now we can apply the Proposition 1.4 of \cite{DS} and conclude that if
$$ A \cap \{|z'| \le 1/4, z_n \le -1+\omega_0\} \ne \emptyset$$ then
$A$ contains a graph $\underline{A} \subset A$ with
$$\underline A \subset B'_{1/4} \times [-1, -1+K \omega_0],$$
with $K=K(n,\Lambda)$ universal, such that
\be \label{45}
 \mathcal H^{n-1}\left(\pi_n(\underline{A}) \right) \ge (1- \frac 14) \mathcal H^{n-1}(B'_{1/4}),
\ee
where $\pi_n$ denotes the projection in the $z'$ variable. 

We choose $\omega_0$ small, depending on $K$ such that $K \omega_0 \le 1/2$ hence 
$$\underline{A} \subset \{z_n \le -\frac 1 2\}.$$
Similarly, if in the cylinder $z' \in B_{1/4}'$ the set $A$ intersects $z_n \ge 1-\omega_0$, 
then we can find a graph $\overline {A} \subset A \cap \{z_n \ge 1/2 \}$ which satisfies \eqref{45} as well.

We will reach a contradiction by estimating the energy $\mathcal J(U,A_{l/2})$ where
$$A_l:=\mathcal C(l,l) \times [0,l] \subset \R^{n+1}.$$

Notice that in $\mathcal C(\frac l 2,\frac l 2)$ the function $U(x,0)$ is sufficiently close to $\pm 1$ 
away from a thin strip around $x_n=0$. 
Indeed, we can use barrier functions as in Lemma \ref{l2} (see \eqref{22}) and bound $U$ by above and below in terms 
of the functions $G_{l/2}(x_n \pm \theta,y)$. This implies that, for any constant $\gamma$ small, we have 
\be\label{46}
|U(x,0)| \ge 1- \gamma^2 \quad \mbox{in $\mathcal C( \frac l 2, \frac l2)$ if $|x_n| \ge C(\gamma)+\theta$.}
\ee

For each $x' \in B'_{l/2}$ we denote by $Q_l(x')$ the 2D half square of size $l$ in the $(x_n,y)$-variables centered at $(x',0) \in \R^n$ as 
$$ Q_{l}(x'):=\{  (x',t,y)|  \quad |t| \le l, \quad y \in [0,l]\}.$$
Now we can apply Lemma \ref{l4.5} part a) and obtain
$$\mathcal J(U(x', \cdot),Q_{l/2}(x')) \ge (C^*-\gamma) \log l.$$
On the other hand, if $$x' l^{-1} =z' \in  \pi_n(\overline{A}) \cap \pi_n(\underline {A}),$$ then by \eqref{44}, we satisfy the hypotheses of part b) of Lemma \ref{l4.5} and obtain
$$\mathcal J(U(x', \cdot),Q_{l/2}(x')) \ge (C^* + c_0(\sigma)) \log l.$$

In conclusion, after integrating in $x' \in B_{l/2}'$ the inequalities above, and using \eqref{45} for $\overline{A}$, $\underline{A}$, we obtain that
$$\mathcal J(U,A_{l/2}) \ge (C^* + c_1)\log l \, \,  \mathcal H^{n-1}(B`_{l/2}),$$
for some $c_1>0$ universal, provided that we choose the constant $\gamma$ sufficiently small. 

This contradicts Lemma \ref{l1conv} below if $\eps_0$ is sufficiently small.

\qed

\begin{lem}\label{l1conv}
\begin{equation}{\label{conv2}}
\mathcal J( U , A_{l/2}) \le C^* \log l \left(  \mathcal H^{n-1}(B_{l/2}') + \eta(\eps_0)\,  l^{n-1} \right) .
\end{equation}
with $\eta(\eps_0) \to 0$ as $\eps_0 \to 0$.
\end{lem}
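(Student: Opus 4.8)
The plan is to prove the upper energy bound in Lemma~\ref{l1conv} by a competitor construction combined with the $\Gamma$-convergence of minimizers of $\mathcal J$ to the perimeter functional. The starting point is the flatness hypothesis: after rescaling by $T$, the level set $\{u=0\}$ becomes $\eps_0$-flat in $\mathcal C(l,l)$, so the rescaled minimizers converge (in $L^1_{loc}$ and in the energy sense, after dividing by $C^*\log l$) to a minimizing cone for the perimeter functional that is trapped in a slab of width $\eps_0$; such a minimizer must be a hyperplane. This is the content of the $\Gamma$-convergence result we are allowed to invoke, and it tells us that, up to an error $\eta(\eps_0)l^{n-1}$ which is $o(l^{n-1})$ as $\eps_0\to 0$, the renormalized energy $\mathcal J(U,A_{l/2})/(C^*\log l)$ is no larger than $\mathcal H^{n-1}(B'_{l/2})$, the area of the flat interface.

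The key steps, in order, would be: (i) Pass to the rescaled functions $U_l(X):=U(lX)$ (or the appropriately anisotropically rescaled version matching the definition of $T$) and recall that minimality of $U$ for $\mathcal J$ is preserved. (ii) Show that the renormalized energies $\mathcal J(U,\mathcal B^+_r)/\log r$ are uniformly bounded — this follows from comparison with the one-dimensional profile $G$ and the expansion $\mathcal J(G,B_R^+)=C^*\log R+O(1)$ from \eqref{c*}, sliding copies of $G_R$ as in \eqref{22} and Lemma~\ref{l2} to control $U$ from both sides by translates of $G$ in the thin transition region, and using $|U|\ge 1-\gamma^2$ outside a bounded strip as in \eqref{46}. (iii) Build an explicit competitor $\tilde U$ on $A_{l/2}$ agreeing with $U$ on the lateral boundary: on each 2D slice $Q_{l/2}(x')$ we glue the profile $G$ (centered near $x_n=0$) in the core and interpolate to the boundary data near $\partial Q_{l/2}$; the slice-wise energy of $G$ is $(C^*+o(1))\log l$ by \eqref{c*}, and the lateral gluing costs only $O(\log l)\cdot o(l^{n-1})$ because the interface is $\eps_0$-flat, so the cross-terms $\int \nabla_{x'}\tilde U\cdot(\dots)$ are controlled. (iv) Integrate over $x'\in B'_{l/2}$ and invoke minimality, $\mathcal J(U,A_{l/2})\le \mathcal J(\tilde U,A_{l/2})\le C^*\log l\,(\mathcal H^{n-1}(B'_{l/2})+\eta(\eps_0)l^{n-1})$.

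The main obstacle I anticipate is step (iii)–(iv): producing a competitor whose energy genuinely has the sharp constant $C^*$ in front of $\log l$, rather than $C^*+O(1/\log l)$ with an $O(1)$ that could accumulate to $O(l^{n-1})$ — larger than the allowed error. The resolution is exactly the $\Gamma$-convergence/compactness input: instead of estimating the competitor by hand one argues by contradiction. If Lemma~\ref{l1conv} failed for a sequence $\eps_0\to 0$, then the renormalized energies of the rescaled minimizers $U_l$ would exceed $C^*\mathcal H^{n-1}(B'_{1/2})+\delta$ in the limit, while $\Gamma$-convergence forces the limiting renormalized energy of any minimizer flat in $\mathcal C(1,1)$ to equal $C^*$ times the perimeter of the flat interface $\{x_n=0\}\cap B'_{1/2}$, i.e.\ exactly $C^*\mathcal H^{n-1}(B'_{1/2})$ — a contradiction. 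Thus the heart of the argument is to cite and apply the $\Gamma$-limit result correctly (including the uniform energy bounds from step (ii) that give compactness), with the slice-wise lower bound \eqref{40} of Lemma~\ref{l4.5}a playing the complementary role of showing one cannot do better than $C^*$, so that the limit is pinned down precisely.
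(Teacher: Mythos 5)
Your competitor idea is the right one and is in fact what the paper does: set $V(x,y):=G(x_n,y)$, interpolate $H=(1-\varphi)U+\varphi V$ over a transition region, and invoke minimality of $U$. But you have misidentified where the difficulty lies, and your proposed fix does not close the actual gap. The per-slice $O(1)$ errors you worry about accumulate only to $O(l^{n-1})$, which is harmless: since $l\ge \theta_0\eps_0^{-1}\to\infty$, any $O(l^{n-1})$ term is absorbed into $\eta(\eps_0)\,l^{n-1}\log l$. The real issue is the gluing cost itself. The cutoff must satisfy $|\nabla\varphi|\lesssim (1+y)^{-1}$ and the transition region has weighted volume of order $l^{n-1}\log l$ (integrating $(1+y)^{-2}$ over a conical shell), so a generic interpolation using only $|U|,|V|\le 1$ and $|\nabla U|,|\nabla V|\le C(1+y)^{-1}$ produces a cost $C\,l^{n-1}\log l$ with a \emph{universal, not small} constant --- which destroys the sharp constant $C^*$. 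The estimate that saves the argument, and that is missing from your proposal, is quantitative closeness of $U$ to $V$ in the bulk of the transition region: trapping $u$ between translates $G_{l/2}(x_n\pm\theta,y)$ as in \eqref{22} gives $|u\mp 1|\le\gamma^2$ for $|x_n|\ge C(\gamma)+\theta$ (see \eqref{46}), whence $|V-U|\le C\gamma^2$ and $|\nabla(V-U)|\le C\gamma^2 y^{-1}$ in the cone $S=\{1\le y\le \gamma^2(|x_n|-C(\gamma)-\theta)\}$; only then does the gluing cost become $C(\gamma)l^{n-1}+C\gamma^2 l^{n-1}\log l\le \eta\,l^{n-1}\log l$.

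Your fallback --- arguing by contradiction via $\Gamma$-convergence --- cannot substitute for this estimate. The $\Gamma$-liminf inequality bounds the renormalized energy of the limit from \emph{below}; it says nothing about an upper bound for $\mathcal J(U,A_{l/2})$. An upper bound on the energy of a minimizer requires exhibiting a competitor with the same boundary data on $\partial A_{l/2}$, i.e.\ precisely the gluing construction whose cost you need to control; the ``convergence of energies of minimizers'' in $\Gamma$-convergence theory is itself proved this way. So the compactness detour is circular: it presupposes the very recovery-sequence estimate that constitutes the content of the lemma. To repair the proof, drop the contradiction argument, keep your step (iii), and insert the barrier comparison and the resulting bounds $|V-U|\le C\gamma^2$, $|\nabla(V-U)|\le C\gamma^2 y^{-1}$ on $S$ before estimating the cross terms.
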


\begin{proof}
We interpolate between $U$ and $V(x,y):=G(x_n,y)$ as
$$H=(1-\varphi)U + \varphi V.$$
Here $\varphi $ is a cutoff Lipschitz function such that
$\varphi =0$ outside $A_{l/2}$, $\varphi=1$ in $\mathcal R$ and $|\nabla  \varphi| \le 8/(1+y)$ in $A_{l/2}\setminus \mathcal R$,  where $\mathcal R$ is the cone
$$\mathcal R:= \{(x,y) | \quad \max \{|x'|,|x_n|\} \le l/2 -1 - 2y  \}. $$
By minimality of $U$ we have
$$\mathcal J(U,A_{l/2}) \le \mathcal J(H,A_{l/2})= \mathcal J(V, \mathcal R) + \mathcal J(H, A_{l/2} \setminus \mathcal R).$$
By \eqref{c*}, $$\mathcal J(V, \mathcal R) \le \mathcal J(V, A_{l/2}) \le (C^*\log l +O(1)) \mathcal H^{n-1}(B_{l/2}'),$$
and we need to show that 
\be\label{JH}
\mathcal J(H, A_{l/2} \setminus \mathcal R) \le \eta \, l^{n-1} \log l
\ee with $\eta$ arbitrarily small.
We have
$$\mathcal J(H, A_{l/2} \setminus \mathcal R) \le 4 \int_{A_{l/2} \setminus \mathcal R}  |\nabla \varphi|^2(V-U)^2 + |\nabla (V-U)|^2   \, dx dy $$
\be\label{JHA}
+ \int_{D} W(u) + W(v) + C(v-u)^2 dx.
\ee
with $D:=\mathcal C(\frac l2, \frac l2) \setminus \mathcal C( \frac l 2-1, \frac l2-1)$. The second integral is bounded by $C l^{n-1}$.

Next we bound the first integral. As in \eqref{46}, $u$ and $v$ are sufficiently close to $\pm 1$ in $\mathcal C( \frac l2, \frac l 2)$ away from a thin strip around $x_n=0$,
$$|v-u| \le  \gamma^2 \quad \mbox{in $\mathcal C(l/2,l/2)$ if $|x_n| \ge C(\gamma)+\theta$.}$$
with $C(\gamma)$ large, depending on the universal constants and $\gamma$. 
Then in the region
$$S:= \{1 \le y  \le \gamma^2( |x_n|- C(\gamma)-\theta)  \}, $$
the extensions $U$ and $V$ satisfy
\be\label{40.0}
|V-U| \le C \gamma^2, \quad \quad \quad |\nabla (V-U)| \le C \gamma^2 y^{-1}.\ee
At all other points we use that $|U|,|V| \le 1$, $|\nabla U|, |\nabla V| \le C/(1+y)$ and we see from \eqref{JHA} that
\begin{align*}
\mathcal J(H, A_{l/2} \setminus \mathcal R) \le & C l^{n-1} + C \int \gamma^2 y^{-2} \chi_{(A_{l/2} \setminus \mathcal R) \cap S} + (1+y)^{-2}\chi_{(A_{l/2} \setminus \mathcal R) \setminus S} \, \, dxdy\\
 \le & C(\gamma) l^{n-1} + C \gamma^2 l^{n-1} \log l \le \eta \, \,  l^{n-1} \log l.
\end{align*}
for all $l$ large, provided that $\gamma$ is chosen small, and \eqref{JH} is proved.

\end{proof}

We conclude this section with the proof of the Lemma \ref{l4.5}.

\

{\it Proof of Lemma \ref{l4.5}}

The proof of \eqref{40} follows by the same argument of Lemma \ref{l1conv} above restricted to the case $n=1$ (now we denote $x_n$ by $t$). First we may assume that $V$ is minimizing the energy among functions which have prescribed boundary data on $\p Q_{l+1} \setminus \{y=0\}$ and are constrained to \eqref{40.1} on $y=0$. We interpolate between $V$ and $G$ 
$$H=(1-\varphi)G + \varphi V,$$
with $\varphi$ defined as above and obtain
$$\mathcal J(H, Q_l) = \mathcal J(V, \mathcal R) + \mathcal J(H, Q_l \setminus \mathcal R).$$
As in \eqref{JHA} we can use that in the region 
$$S:=\{1 \le y \le \gamma^2(|t|-l/2)\}$$
the functions $V$ and $G$ satisfy the estimate \eqref{40.0} and obtain
$$\mathcal J(H, Q_l)  \le C(\gamma) + C \gamma^2 \log l \le \frac \gamma 2 \log l.$$  
Thus,
$$\mathcal J(V, \mathcal R) \ge \mathcal J(H, Q_l) -\frac \gamma 2 \log l \ge \mathcal J(G, Q_l) -\frac \gamma 2 \log l,$$
and \eqref{40} follows by \eqref{c*}. Above we used that $H=G$ on $\p Q_l \setminus \{y=0\}$ and the fact that $G$ is a minimizer of $\mathcal J$ in $Q_l$.

For the second part we use $\bar V$, the monotone increasing rearrangement in the $t$ direction of $V$.
Denote by $$\Gamma(D):=\{ z=V(t,y)| \quad (t,y) \in D \} \subset \R^3$$ the graph of $V$ over the set $D$, and 
let $T$ be the angular region
$$T:= \{ y \ge |t-x_1| \} \cap B_{l^\sigma/2}.$$
Notice that our hypotheses imply that $|s_1-s_2| \ge l^{\sigma}$ provided that $c(\theta_0)$ is sufficiently small. This means that
that the projection of $\Gamma(T) $ along the $t$ direction is included in the projection of $\Gamma(Q_l \setminus T)$. 

From the theory of monotone increasing rearrangements (see \cite{K}) we obtain that
 $$\mathcal J(V,Q_l) \ge \mathcal J(\bar V, Q_l) + \int_T V_t^2 \, dtdy.$$
 On each horizontal segment $\ell _y$ of $T$ at height $y \in [1,l^\sigma/4]$, we use that $V(t,y)$ and $G(t-x_1,y)$ are sufficiently close and obtain
 $$\int_{\ell_y} V_t^2 dt \ge \frac c y,$$ hence
$$\int_T V_t^2 \, dtdy \ge 2 c_0(\sigma) \log l.$$ 
 Notice that the rearrangement $\bar V$ still satisfies the hypothesis in part a), and then the conclusion follows from a).   
 
\qed

\section{Improvement of flatness}

We state the improvement of flatness property of minimizers.

\begin{thm}[Improvement of flatness]{\label{c1alpha}}

Let $U$ be a minimizer of $\mathcal J$ and assume
\be\label{50}
0 \in \{u=0\} \cap \mathcal C(l,l) \subset \mathcal C(l,\theta),
\ee
and 
\be\label{51}
\mbox{the balls of radius $C' l^2 \theta^{-1}$ ($C'$ universal)} 
\ee
which are tangent to 
$\mathcal C(l,\theta)$ by below and above at $\pm \theta e_n$ are included in $\{u<0\}$ respectively $\{u>0\}$.

There exists universal integers $m_0 \ge 1$, $m_1 \ge 0$ such that if
the balls of radius 
$$R_m:= l_m^2 \theta_m^{-1}, \quad \quad l_m:=2^m l, \quad \theta_m:=2^{\frac 32 m} \theta, \quad \quad m:=0,1,\cdots, m_1,$$
tangent to $\mathcal C(l_m,\theta_m)$ by below and above at $\pm \theta_m e_n$ are included in $\{u<0\}$ respectively $\{u>0\}$,
and if
$$\theta l^{-1} =:  \eps \le \eps_1(\theta_0), \quad \theta \ge \theta_0,$$
with $\eps_1(\theta_0)$ sufficiently small, then $\eqref{50}$, $\eqref{51}$ hold for $\bar l$, $\bar \theta$ after a rotation with
$$\{u=0 \} \cap   \mathcal C_\xi (\bar l, \bar l) \subset C_\xi(\bar l, \bar \theta), \quad \quad \bar l:=2^{-m_0} l, \quad \bar \theta:=2^{-\frac 32 m_0} \theta.$$
\end{thm}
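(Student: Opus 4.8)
The plan is to run a compactness argument in the spirit of \cite{S3,S2}. Suppose the conclusion fails: there is a sequence of minimizers $U_k$ of $\mathcal J$ satisfying \eqref{50}, \eqref{51} (and the corresponding ball conditions for the dyadic scales $m=0,\dots,m_1$) with flatness parameters $\eps_k=\theta_k l_k^{-1}\to 0$ and $\theta_k\ge\theta_0$, but for which the improved inclusion $\{u_k=0\}\cap\mathcal C_\xi(\bar l_k,\bar l_k)\subset\mathcal C_\xi(\bar l_k,\bar\theta_k)$ fails for every rotation $\xi$. Rescale by setting $z'=x'l_k^{-1}$, $z_n=x_n\theta_k^{-1}$ and let $A_k\subset\mathcal C(1,1)$ be the rescaled zero level set. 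The first step is to show that, after passing to a subsequence, the sets $A_k$ converge (in the Hausdorff sense on compact subsets of the cylinder) to a limit set $A_\infty$ which is contained in the graph of a function $w:B_1'\to\R$ that is a \emph{viscosity solution of the linearized minimal surface equation} $\triangle w=0$.

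The mechanism for this is precisely the Harnack inequality of Proposition \ref{TH} (built on Lemma \ref{l3}, Corollary \ref{c1}, and Lemma \ref{l1conv}): iterating it at the dyadic scales $l_m$, $\theta_m$ — which is exactly why the hypotheses in \eqref{51} are imposed for $m=0,\dots,m_1$ — gives equicontinuity of the rescaled level sets as multivalued graphs, so that $A_k$ is trapped between a decreasing sequence of subsolution barriers and an increasing sequence of supersolution barriers and collapses to a single-valued Hölder graph in the limit. To identify the limit equation one argues by contradiction in the viscosity sense: if a smooth test function $P$ with $\triangle P(z_0)>0$ (a polynomial of the type used in Proposition \ref{p1}) touched $w$ by below at an interior point, then, undoing the rescaling, the surface $\Gamma$ obtained from $P$ would be tangent by below to $\{u_k=0\}$ in a ball of radius $l\in[R^{1/3},\delta^3R]$ with a strictly positive mean curvature bound, and the balls $B_R(-t_0e_n)$, $B_{R_m}(-t_me_n)$ required in Proposition \ref{p1} are supplied by the dyadic ball hypotheses \eqref{51}; Proposition \ref{p1} then forces $\sum a_i\le\delta^4R^{-1}$, contradicting $\triangle P(z_0)>0$ once $\delta$ is small. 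The symmetric argument with touching by above handles $\triangle P(z_0)<0$.

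Once $A_\infty$ is known to lie on the graph of a harmonic function $w$ with $w(0)=0$ and $|w|\le 1$ on $B_1'$, classical interior estimates for harmonic functions give $|w(z')-\nabla w(0)\cdot z'|\le C|z'|^2$, hence on $B'_{2^{-m_0}}$ the graph of $w$ lies within $\frac14 2^{-\frac32 m_0}$ of an affine function, provided the universal integer $m_0$ is chosen large enough that $C2^{-m_0}\le\frac14 2^{-\frac32 m_0}$ fails to hold... more precisely so that $C\,2^{-2m_0}\le \tfrac14\,2^{-\frac32 m_0}$, i.e. $2^{-\frac12 m_0}\le\tfrac1{4C}$. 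Translating this back through the Hausdorff convergence $A_k\to A_\infty$: for $k$ large the rescaled set $A_k$ on $B'_{2^{-m_0}}$ is within $\tfrac12 2^{-\frac32 m_0}$ of the same affine graph, and the slope of that affine function tends to $0$, so choosing $\xi$ to be the unit normal of this affine plane and undoing the two rescalings yields $\{u_k=0\}\cap\mathcal C_\xi(\bar l_k,\bar l_k)\subset\mathcal C_\xi(\bar l_k,\bar\theta_k)$ with $\bar l_k=2^{-m_0}l_k$, $\bar\theta_k=2^{-\frac32 m_0}\theta_k$. This contradicts the assumed failure for all $\xi$ and all large $k$, proving the theorem. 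One must also check that \eqref{50} and \eqref{51} persist at the new scales for the rotated cylinder: \eqref{50} is immediate from the inclusion just obtained together with $0\in\{u_k=0\}$, and \eqref{51} for the single step follows because the required tangent ball of radius $C'\bar l_k^2\bar\theta_k^{-1}$ is contained in the larger tangent ball of radius $C'l_k^2\theta_k^{-1}$ already known to lie in $\{u_k<0\}$ (resp. $\{u_k>0\}$), after a harmless adjustment of $C'$ — this is the point already noted in the remark following Proposition \ref{TH}.

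The main obstacle is the compactness-and-limit-equation step: proving that the rescaled level sets converge to the graph of a genuine viscosity solution of the linearized equation, uniformly up to the boundary of the touching neighborhoods, requires carefully combining the Harnack iteration (to get the graph property and equicontinuity in the limit) with the barrier estimates of Section 4, and the borderline nature of $s=\tfrac12$ means the relevant quantitative bounds carry logarithmic losses $\tfrac{\log R}{R}$ rather than $R^{-1}$ — which is exactly where the integral estimate of Lemma \ref{l3}, rather than a pointwise curvature bound, has to be leveraged. The energy comparison Lemma \ref{l1conv} versus the lower bound Lemma \ref{l4.5} is what ultimately rules out any non-flat limit, so one has to make sure the dyadic hypotheses in \eqref{51} are strong enough to feed Proposition \ref{TH} at every scale $2^m l$ up to $m_1$; tracking that the universal integers $m_0,m_1$ can be fixed independently of $\eps$, $\theta_0$ is the bookkeeping heart of the proof.
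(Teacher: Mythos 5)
Your proposal follows essentially the same compactness scheme as the paper's proof: rescale the level sets, iterate the Harnack inequality of Proposition \ref{TH} (fed by the dyadic ball hypotheses) to extract a H\"older graph limit $w$, identify $\triangle w=0$ in the viscosity sense by touching with quadratics and invoking Proposition \ref{p1}, and then fix $m_0$ from the interior estimate for harmonic functions before rescaling back. The one loose point is your justification that \eqref{51} persists at the new scale: the new tangent ball touches at $-\bar\theta\xi$ rather than $-\theta e_n$, so it is \emph{not} literally contained in the old tangent ball; one instead combines the old ball with the flatness of $\{u=0\}$ (this is where the paper's condition $4C(n)C'\,2^{-m_0/2}\le 1$ enters), but this is the same ``easy to check'' verification the paper also leaves to the reader, so it does not affect the correctness of your argument.
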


Here $\xi \in \R^n$ is a unit vector and $C_\xi(\bar l, \bar \theta)$ represents the cylinder with axis $\xi$, base $\bar l$ and height $\bar \theta$.

As a consequence of this flatness theorem we obtain our main theorem.

\begin{thm}{\label{planelike}}
Let $U$ be a global minimizer of $\mathcal J$. Suppose that the $0$ level set $\{u=0\}$ is asymptotically flat at $\infty$.
Then the $0$ level set is a hyperplane and $u$ is one-dimensional.
\end{thm}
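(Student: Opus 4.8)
The plan is to derive Theorem \ref{planelike} as a routine consequence of the improvement of flatness statement (Theorem \ref{c1alpha}) by a standard iteration/compactness argument, exactly in the spirit of \cite{S2,S3}. First I would unwind the hypothesis that $\{u=0\}$ is asymptotically flat at $\infty$: by definition there are sequences $\theta_k$, $l_k$, $\xi_k$ with $l_k\to\infty$ and $\eps_k:=\theta_k l_k^{-1}\to 0$ such that $\{u=0\}\cap B_{l_k}\subset\{|x\cdot\xi_k|<\theta_k\}$. After a rotation we may assume $\xi_k=e_n$, and after a vertical translation (which does not affect the problem since the class of minimizers is translation-invariant) we may assume $0\in\{u=0\}$. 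I would first check that the auxiliary hypothesis \eqref{51}, and more generally the ball conditions at all the scales $m=0,\dots,m_1$ required by Theorem \ref{c1alpha}, are automatically verified for $k$ large: the balls of radius $C'l_k^2\theta_k^{-1}=C'l_k\eps_k^{-1}$ tangent to the flat cylinder at $\pm\theta_k e_n$ have radius tending to $\infty$ much faster than $l_k$, so a minimizer which is flat in $\mathcal C(l_k,l_k)$ must in fact lie below such a huge ball on the $\{u<0\}$ side (and above on the $\{u>0\}$ side); this follows by sliding the radial supersolutions $\Phi_{R,z}$ from Lemma \ref{l1}, exactly as in the proof of Lemma \ref{l3} (see \eqref{22}) and Proposition \ref{TH}. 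One also needs $\theta_k\ge\theta_0$ for some fixed $\theta_0$; if $\theta_k\to 0$ then, after rescaling, the level set converges (by the $\Gamma$-convergence Lemma \ref{l1conv} and compactness) to a flat hyperplane with multiplicity, forcing $u$ to be one-dimensional directly, so we may assume $\theta_k$ bounded below.

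Next I would run the iteration. Fix $\theta_0>0$ and the universal integers $m_0,m_1$ and the threshold $\eps_1(\theta_0)$ from Theorem \ref{c1alpha}. For $k$ large enough that $\eps_k\le\eps_1(\theta_0)$ and all the hypotheses above hold, Theorem \ref{c1alpha} produces a new unit direction $\xi$ and improved cylinder $\mathcal C_\xi(\bar l,\bar\theta)$ with $\bar l=2^{-m_0}l_k$, $\bar\theta=2^{-\frac32 m_0}\theta_k$, so that $\{u=0\}\cap\mathcal C_\xi(\bar l,\bar l)\subset\mathcal C_\xi(\bar l,\bar\theta)$, and crucially the new flatness ratio is $\bar\theta/\bar l=2^{-\frac12 m_0}\eps_k<\eps_k$. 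Moreover the ball conditions at the new base scales are inherited (the paper remarks this after the statement of Proposition \ref{TH} and it holds verbatim here, since the tangent balls only get larger). Hence the hypotheses of Theorem \ref{c1alpha} are again satisfied at scale $\bar l$, as long as $\bar\theta\ge\theta_0$, and we may iterate: starting from scale $l_k$ we obtain, for $j=0,1,2,\dots$, unit vectors $\xi_{k,j}$ with
\be
\{u=0\}\cap\mathcal C_{\xi_{k,j}}(2^{-m_0 j}l_k,\ 2^{-m_0 j}l_k)\subset \mathcal C_{\xi_{k,j}}\!\left(2^{-m_0 j}l_k,\ 2^{-\frac32 m_0 j}\theta_k\right),
\ee
so the flatness ratio at step $j$ is $2^{-\frac12 m_0 j}\eps_k$. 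The iteration continues as long as the height $2^{-\frac32 m_0 j}\theta_k\ge\theta_0$; since $\theta_0$ is fixed and the heights shrink geometrically, this gives $j$ up to roughly $\frac{\log(\theta_k/\theta_0)}{\frac32 m_0\log 2}$ steps, and after that many steps we are in the regime where the height is comparable to $\theta_0$ and the base is still large, so a final compactness argument (again Lemma \ref{l1conv} plus Corollary \ref{c1}, as used in proving Proposition \ref{TH}) shows the level set is essentially a hyperplane on that scale.

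The point is that letting $k\to\infty$ and combining the steps, the level set $\{u=0\}$ becomes, on larger and larger balls $B_\rho$, trapped between two parallel hyperplanes a distance $o(\rho)$ apart, with the normal direction $\xi_\rho$ converging (since the directional change at each iteration step is controlled by the flatness, as in \cite{S2}, the $\xi_{k,j}$ form a Cauchy sequence with limit $\xi_\infty=\xi_\infty(k)$, and these stabilize as $k\to\infty$). A standard real-variable argument then yields: for every $\rho$, $\{u=0\}\cap B_\rho$ lies within distance $C\rho\,\omega(\rho)$ of a fixed hyperplane $\{x\cdot\xi_\infty=0\}$ with $\omega(\rho)\to 0$ as $\rho\to\infty$; since this holds for all $\rho$, the level set is exactly the hyperplane $\{x\cdot\xi_\infty=0\}$. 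Finally, once $\{u=0\}$ is a hyperplane, I would conclude that $u$ is one-dimensional: the half-space $\{x\cdot\xi_\infty>0\}\times\R_+$ and its reflection are invariant, monotonicity of $U$ in the $\xi_\infty$ direction follows from the sliding method (comparing $U$ with its translates $U(\cdot+t\xi_\infty,\cdot)$, which cannot have an interior contact by the maximum principle and the strong maximum principle at $\{y=0\}$ via the Neumann condition), and a monotone global minimizer whose level set is a hyperplane depends only on $x\cdot\xi_\infty$ — alternatively, by uniqueness of the 2D profile $G$ from \cite{PSV}, $U(x,y)=G(x\cdot\xi_\infty,y)$ up to translation, so $u=g(x\cdot\xi_\infty)$. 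I expect the main obstacle to be the bookkeeping in the iteration: verifying that at each step the full list of multiscale ball hypotheses in Theorem \ref{c1alpha} (the conditions at $m=0,\dots,m_1$) remains valid with the \emph{same} universal constants, and that the accumulated rotations $\xi_{k,j}$ converge at a summable rate; both are handled exactly as in \cite{S2,S3}, but they are where the real care is needed, together with the borderline check that $\theta_k$ may be taken bounded below without loss of generality.
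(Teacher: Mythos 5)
Your proposal is correct and follows essentially the same route as the paper: iterate Theorem \ref{c1alpha} starting from the scales provided by asymptotic flatness (having first verified the multiscale tangent-ball hypotheses \eqref{51}), stop when the height drops to $\theta_0$, and then send $\eps\to0$ and $\theta_0\to0$, with the convergence of the normals controlled by the geometric decay of the flatness ratios. The only superfluous piece is your separate treatment of the case $\theta_k\to0$: since the trapping hypothesis is monotone in $\theta$, one simply replaces $\theta_k$ by $\eps l_k$, which is exactly what the paper does when it fixes $\theta l^{-1}=\eps$.
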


\begin{proof} Without loss of generality assume $u(0)=0$. Fix $\theta_0>0$, and $\eps \ll \eps_1(\theta_0)$. From the hypotheses we can find $l$, $\theta$ large such that $\theta l^{-1} = \eps$ and, after eventually a rotation, conditions \eqref{50}, \eqref{51} hold for all $l_m$, $\theta_m$
$$l_m:=2^m l, \quad \theta_m:=2^{\frac 32 m} \theta, \quad \quad \mbox{with} \quad m \in \{m_1,m_1-1,\cdots , 1- m_0\}.$$
Then, by Theorem \ref{c1alpha}, \eqref{50}, \eqref{51} hold also for $m=-m_0$ after a rotation. 
It is easy to check that we can apply Theorem
\ref{c1alpha} repeatedly till the
height of the cylinder becomes less than $\theta_0$. We conclude that $\{u=0\}$ is trapped in a cylinder with flatness less than
$\eps$ and height between $2^{-\frac 32 } \theta_0$ and $\theta_0$. We let first $\eps \to 0$ and then $\theta_0 \to 0$ and obtain the desired conclusion.

\end{proof}

\

{\it Proof of Theorem \ref{c1alpha}}

The proof is by compactness and it follows from Proposition \ref{TH} and Proposition \ref{p1}.
Assume by contradiction that there exist $U_k$, $\theta_k$, $l_k$ such that 

a) $U_k$ is a minimizer of $\mathcal J$, and satisfies \eqref{50}, \eqref{51} for $l_k$, $\theta_k$, together with the second hypothesis for $l_{k,m}$, $\theta_{k,m}$ and $m \in \{0,1,\cdots,m_{1,k}\}$,

b)  $\theta_k \ge \theta_0$, $\theta_k l_k^{-1}= \eps_k \to 0$, $m_{1,k } \to \infty$, as $k \to \infty$,

c) the conclusion of Theorem \ref{c1alpha} does not hold for $u_k$ with a constant $m_0$ depending only on $n$ and $C'$ which we will specify later. 

\

Let $A_k$ be the rescaling of the $0$ level sets given by
$$ (x',x_n) \in \{ u_k =0 \} \mapsto (z',z_n) \in A_k$$
$$ z'=x'l_k^{-1}, \quad z_n=x_n \theta_k^{-1}.$$

{\it Claim 1:} $A_k$ has a subsequence that converges uniformly on $|z'|
\le
1/2$ to a set $A_{\infty}=\{(z',w(z')), \quad |z'| \le 1/2 \}$ where $w$
is a Holder continuous function. \\

{\it Proof:} Fix $z_0'$, $|z_0'| \le 1/2$. We apply Proposition \ref{TH} for the function $u_k$ in the
cylinder of base $B'_{l/2}(l_kz_0')$ and height $2 \theta_k$
in which the set $\{ u_k =0 \}$ is trapped. Thus, there exist an increasing function $\varepsilon_0(\theta)>0$,
$\varepsilon_0(\theta) \to 0$ as $\theta \to 0$, such that $\{ u_k =0 \}$
is
trapped in the cylinder of base $B'_{l/2}(l_kz_0')$ and height $2(1-\frac{\omega_0}{2})\theta_k$
provided that $4\theta_k l_k^{-1} \le \varepsilon_0(2\theta_k)$. Rescaling back we find that the oscillation of $A_k$ in the $z_n$ variable in $B'_{1/8}(z_0')$ is bounded by $2(1-\frac{\omega_0}{2})$.
It is not difficult to see that we can apply the Harnack inequality 
repeatedly and we find that the oscillation of $A_k$ in the $z_n$ variable in $B'_{2^{-2m -1}}(z_0')$ is bounded by $2(1-\frac{\omega_0}{2})^m$
provided that
$$ \eps_k \le 4^{-m-1} \varepsilon_0 \left (2(1-\frac{\omega_0}{2})^m
\theta_k \right).$$
Since these inequalities are satisfied for all $k$ large, the claim follows from a version of Arzela-Ascoli Theorem.\\

{\it Claim 2:} The function $w$ is harmonic (in the viscosity sense).\\

{\it Proof:} The proof is by contradiction. Fix a quadratic polynomial
$$z_n=P(z')=\frac{1}{2}{z'}^TMz' + \xi \cdot z', \quad \quad \|M \|
< \delta^{-1}, \quad 2|\xi| <\delta^{-1}$$
such that $ tr \, M >\delta $, $P(z')+ \delta  |z'|^2$ touches
the graph of $w$, say, at $0$ for simplicity, and stays below $w$ in
$|z'|<\delta$, for some small $\delta$.

Thus,
for all $k$ large we find points $({z_k}',{z_k}_n)$ close to $0$ such that
$P(z')+ const$ touches $A_k$ by below at $({z_k}',{z_k}_n)$ and stays
below it in $|z'-{z_k}'|< \delta/2$.

This implies that, after eventually a translation, there exists a surface
$$\Gamma:=\left \{ x_n =
\frac{\theta_k}{l_k^2} \sum \frac{a_i}{2} x_i^2+ \frac{\theta_k}{l_k} \xi_k
\cdot x'
\right \}, \quad |\xi_k|\le  \delta^{-1}, \quad |a_i| \le \delta^{-1},$$
with $$\sum a_i > \delta$$ that touches $\{ u_k = 0 \}$ at the origin and stays below it in the cylinder $\mathcal C(\frac 12 \delta l_k, \theta_k)$.

Now we apply Proposition \ref{p1} with
$$\bar l:= \delta^3 l_k, \quad \bar \theta:=\theta_k, \quad \bar R:= {\bar l}^2 \bar \theta^{-1}, \quad \bar \delta:= \delta^2, \quad  \bar a_i:= \frac{\theta_k}{l_k^2} \, a_i = \frac{\bar \delta^3}{\bar R}a_i  , \quad \bar b'= \frac{\theta_k}{l_k}  \, \xi_k.$$
The hypotheses are satisfied since 
$$\bar l \le c(\delta)\eps_k \bar R \le \bar \delta^3 \bar R,  \quad \quad \bar l \ge c(\theta_0)\bar R ^{1/2} \ge \bar R^{1/3},$$
and
$$|\bar a_i|, |b'|\bar l^{-1} \le \delta^2 {\bar R}^{-1} = \bar \delta {\bar R}^{-1}.$$
Moreover, by property a) above the balls of radius $$\bar R_m={\bar l}^2_m {\bar \theta}_m^{-1}, 
\quad \mbox{with} \quad {\bar l}_m:=2^m \bar l, \quad {\bar \theta}_m:=2^{\frac 32 m} \theta, \quad m \in \{0,1,\cdots, m_{k,1}\},$$
and tangent to $\mathcal C (\bar l _m, \bar \theta_m)$ are included in $\{u<0\}$ and respectively $\{u>0\}$.
By Proposition \ref{p1} we conclude that 
$$\sum \bar a_i  \le \bar \delta^4 \bar R^{-1} \quad \Longrightarrow \quad  \sum a_i \le \bar \delta= \delta^2,$$
and we reached a contradiction, and Claim 2 is proved.

\

Since $w$ is harmonic, and $w(0)=0$,
$$|w-\xi' \cdot z'| \le C(n) |z'|^2 , \quad \quad |\xi'|\le C(n),$$
with $C(n)$ a constant depending only on $n$. We choose $2^{-m_0}=\eta$ sufficiently small such that
$$C(n) \eta^2 \le \frac 12 \eta^{3/2}, \quad \quad 4 C(n) C' \eta^\frac 12 \le 1.$$ 
Now it is easy to check that after rescaling back, and using the fact that $A_k$ converge uniformly to the
graph of $w$, the sets $\{u_k=0\}$ satisfy the
conclusion of the Theorem \ref{c1alpha} for all $k$ large enough, and we reached a contradiction.

\qed

\end{document}